\def\theenumi{\@roman\c@enumi}  \def\@linkcolor{black} \def\@citecolor{black}  \makeatother  
\font \sectionfont = cmbx12 
\renewcommand\section{
\@startsection{section}%
{1}{0pt}{-1.8\baselineskip}{.8\baselineskip}%
{\sectionfont \raggedright}}
\def\mthmskip{\vskip .2 true cm}
\theoremstyle{plain}
\newtheorem{thm}{Theorem}[section]
\newtheorem{lemma}[thm]{Lemma}
\newtheorem{cor}[thm]{Corollary} 
\newtheorem{prop}[thm]{Proposition}
\newtheorem{exercise}[thm]{Exercise}
\newtheorem{claim}[thm]{Claim}
\newtheorem{conjecture}[thm]{Conjecture}
\newtheorem{question}[thm]{Question}
\newtheorem{problem}[thm]{Problem}
\newtheorem{openpr}[thm]{Open-Ended Problem}
\theoremstyle{definition}
\newtheorem{definition}[thm]{Definition}
\newtheorem{example}[thm]{Example}
\newtheorem{notation}[thm]{Notation}
\newtheorem{assumptions and notation}[thm]{Assumptions and Notation}
\newtheorem{remark}[thm]{Remark}
\newtheorem{observation}[thm]{Observation}
\newtheorem{construction}[thm]{Construction}
\def\bprop{\begin{prop}}\def\eprop{\end{prop}}
\def\bthm{\begin{thm}}\def\ethm{\end{thm}}
\def\bcor{\begin{cor}}\def\ecor{\end{cor}}
\def\blemma{\begin{lemma}}\def\elemma{\end{lemma}}
\def\bexercise{\begin{exercise}}\def\eexercise{\end{exercise}}
\def\bconjecture{\begin{conjecture}}\def\econjecture{\end{conjecture}}
\def\bclaim{\begin{claim}}\def\eclaim{\end{claim}}
\def\bquestion{\begin{question}}\def\equestion{\end{question}}
\def\bproblem{\begin{problem}}\def\eproblem{\end{problem}}
\def\boproblem{\begin{openpr}}\def\eoproblem{\end{openpr}}
\def\bconstruction{\begin{construction}}\def\econstruction{\end{construction}}
\def\bdefinition{\begin{definition}}\def\edefinition{\end{definition}}
\def\bremark{\begin{remark}}\def\eremark{\end{remark}}
\def\bnotation{\begin{notation}}\def\enotation{\end{notation}}
\def\bobservation{\begin{observation}}\def\eobservation{\end{observation}}
\def\bexample{\begin{example}}\def\eexample{\end{example}}
\def\bequation{\addtocounter{thm}{1}\begin{equation}} 
\def\eequation{\end{equation}}
\def\balign{\begin{align*}}
\def\ealign{\end{align*}} 
\def\bnualign{\addtocounter{thm}{1}\begin{align}}
\def\enualign{\end{align}}
\def\myalign{\addtocounter{thm}{1}\begin{align}}
\def\citem#1{\item[{\rm (#1)}]}
\def\emlist{\end{list}}
\def\bmatrix{\begin{pmatrix}}
\def\ematrix{\end{pmatrix}}
\def\to{\longrightarrow} 
\def\map#1{\xrightarrow{\ #1\ }} 
\def\cal{\mathcal}
\font \prooffont = cmcsc10 
 \font \sectionfont = cmbx12
\font \prooffont = cmcsc10 
\def\mproof{\vskip 0mm\noindent {\prooffont Proof:\ \ }} 
\def \mproofof#1#2{\vskip 0mm\noindent {\it Proof of #1 {\rm #2}.\ \ }} 
\def\bproof{\begin{proof}} 
\def\eproof{\end{proof}}
\def\mqed{\unskip\nobreak\hfil\penalty50\hskip1em\null\nobreak\hfil\mqedsymbol
\parfillskip=0pt\finalhyphendemerits=0\endgraf}
\def\mqed{\unskip\nobreak\hfil\penalty100\hfill\null\nobreak\hfil
        {\vbox{\hrule\hbox{\vrule\kern3.1pt
        \vbox{\kern6pt}\kern3pt\vrule}\hrule}}  {\parfillskip=0pt\finalhyphendemerits=0\par} 
        \mthmskip }
\def\b{\begin}  
\def\l{\label}
\def \mspace{\vglue .2cm}
 \newtheoremstyle{figstyle}
  {} %
  {.5cm} %
    {} %
  {4.8cm} %
  {\bfseries} %
  {.} %
  {.5em} %
  {} %
\def\bfigure#1{
\vglue .5cm\begin{center} \begin{tikzpicture}
 [every node/.style={scale=#1},  auto]}
 \def\efigure{\end{tikzpicture} \end{center}\vglue .3cm}
\def\bdiagram#1{
\vglue .2cm\begin{center} \begin{tikzpicture}
 [every node/.style={scale=#1},  auto]}
 \def\ediagram{\end{tikzpicture} \end{center}\noindent}
\def\dim{{\rm{dim}}}
\def\ext{{\rm{Ext}}}
\def\Ho{{\rm H}}     %
\def\hom{{\rm{Hom}}}
\def\rank{{\rm{rank}\,}}
\def\reg{{\rm{reg}}}
\def\sup{{\rm sup}}
\def\tor{{\rm{Tor}}}
\def\B{{\bf B}}
\def\K{{\bf K}}
\def\T{{\bf T}}
\def\F{{\bf F}}
\def\G{{\bf G}}
\def\L{{\bf L}}
\def\to{\longrightarrow}
\def\RR{{\mathcal R}}
\def\mm{{\mathfrak m}}
\def\gm{{\mathfrak m}}
\def\ff{{f_{1}, \dots, f_{c}}}
\def\ff#1{{f_{1},\dots, f_{#1}}}
\def\Tor{{\rm{Tor}}}
\def\B{{\bf B}}
\def\K{{\bf K}}
\def\T{{\bf T}}
\def\F{{\bf F}}
\def\G{{\bf G}}
\def\L{{\bf L}}
\def\dim{{\rm{dim}}}
\def\ext{{\rm{Ext}}}
\def\Ho{{\rm H}} %
\def\hom{{\rm{Hom}}}
\def\rank{{\rm{rank}\,}}
\def\reg{{\rm{reg}}}
\def\sup{{\rm{sup}}}
\def\tor{{\rm{Tor}}}
\def\to{\longrightarrow}
\def\RR{{\mathcal R}}
\def\ff{{f_{1}, \dots, f_{c}}}
\def\ff#1{{f_{1},\dots, f_{#1}}}
\def\sym{{\rm Sym}}
\def\CR{{\mathcal R}}
\def\mm{{\mathfrak m}}
\def\even{{\rm even}}
\def\odd{{\rm odd}}
 \def\ldown{{\bf L\kern-2.5pt\downarrow}}
  \def\lup{{\bf L\kern-2.5pt\uparrow}}
  \def\f{{\bf f}}
\begin{document}

\baselineskip 14.5  pt  %

\title{Tor as a Module over an Exterior Algebra}
\author[David Eisenbud]{David Eisenbud}
\address{Mathematics Department, University of California at Berkeley, Berkeley, CA 94720, USA}
\email{de@msri.org}

\author[Irena Peeva]{Irena Peeva}
\address{Mathematics Department, Cornell University, Ithaca, NY 14853, USA}

\author[Frank-Olaf Schreyer]{Frank-Olaf Schreyer}
\address{Fachbereich Mathematik, Universit\"at des Saarlandes, Campus E2 4, D-66123 Saar\-br\"ucken, Germany}
\email{schreyer@math.uni-sb.de}

\begin{abstract}
Let $S$ be a regular local ring with residue field $k$ and let $M$ be a finitely
generated $S$-module. Suppose that
 $f_1,\dots ,f_c\in S$ is a regular sequence that annihilates $M$,
 and let $E$ be an exterior algebra over $k$ generated by $c$ elements.
 The homotopies for  the $f_{i}$ on a free
resolution of $M$ induce  a natural structure of graded $E$-module  on 
 $\tor^{S}(M,k)$.  In the case where
 $M$ is a high syzygy over the complete intersection $R:=S/(f_{1},\dots,f_{c})$
 we describe this $E$-module structure in detail, including
 its minimal free resolution over $E$.
 
Turning to 
 $\ext_{R}(M,\, k)$ we
show that, when $M$ is a high syzygy over $R$, the
 minimal free resolution of $\ext_{R}(M,\, k)$ as a module over the ring
 of CI operators is the Bernstein-Gel'fand-Gel'fand dual of the $E$-module
$\tor^{S}(M,\,k)$. 

For the proof we introduce \emph{higher CI operators}, and
give a construction of a (generally non-minimal) resolution of $M$ over
$S$ starting from a resolution of $M$ over $R$ and its higher CI operators.

 \end{abstract}

\thanks{The work on this paper profited from the good conditions for mathematics at MSRI, and was partially supported by the National Science Foundation under Grant 0932078000.  The first two authors are grateful to the 
National Science Foundation for partial support under Grants DMS-1502190, DMS-1702125 and  DMS-1406062.}

\keywords{Free resolutions, exterior algebras, Tor, Eisenbud operators.}

\subjclass[2010]{Primary: 13D02.
}

\maketitle

\section{Introduction} \label{Sintro}
 Throughout this paper we write $S$ for a regular local ring with maximal ideal $\gm$ and residue field $k$, and we let
 $f_1,\dots ,f_c\in S$ be a regular sequence. 
 Set $I := (f_{1},\dots,f_{c})\subset S$ and consider the complete intersection $R:= S/I$.
 Let
$M$ be a finitely generated
 $S$-module
annihilated by
 $I$.   We denote by $E$ the exterior algebra
 $$E:= \wedge_{k}(I/\gm I) =: k\langle e_{1}, \dots e_{c}\rangle\,.$$
 The finite-dimensional graded vector space $\tor^{S}(M,k)$
has a natural $E$-module structure induced by the action of  homotopies for the $f_i$ on the minimal $S$-free resolution of $M$ (Section~\ref{exterior structure}).  
For some modules $M$, the action of $E$ on $\tor_{S}(M,k)$ is trivial but,
 in the case where
 $M$ is a high $R$-syzygy  in the sense of \cite{EP} (explicit bounds  are given in \cite{EP} and \cite{EP1}) we prove that it is highly nontrivial:
\bmlist
 
\citem{i}  We prove that the $E$-module $\tor^{S}(M,k)$ is generated  
 by $\tor^{S}_{0}(M,k)$ and $\tor^{S}_{1}(M,k)$, and its (Castelnuovo-Mumford) regularity  is $1$ (Corollary~\ref{torreg} and Theorem~\ref{reg1}).

\citem{ii} 
 Let
  $$
 T' := E\cdot \tor^{S}_{0}(M,k) \subset \tor^{S}(M,k)
  $$
and let  
$$T'' : =  \tor^{S}(M,k)/T'$$
be the quotient.
Assuming that $k$ is infinite and the generators of $(\ff c)$ are chosen generally, we compute vector space bases of $T'$ and $T''$, and show that, as $E$-modules, $T'$ and $T''$ have Gr\"obner deformations to  direct sums of copies of $E/(e_{p}, \dots, e_{c})$ for $p = 1,\dots ,c$ (Theorem~\ref{twores}). It follows that, even when $k$ is finite, $T'$ and $T''$ have linear $E$-free resolutions, given explicitly in (iv) below. 

\citem{iii} We prove that the Betti numbers of the $0$-linear strand 
of the  minimal  $E$-free graded resolution of  $\tor^{S}(M,k)$
 are given by the even Betti numbers of $M$ over $R$, and the  Betti numbers of the $1$-linear strand are given by the odd Betti numbers of $M$ over $R$. That is:
\b{align*}
\beta_{i,i}^E\Big( \tor^{S}(M,k) \Big)&=\beta_{2i}^R(M)
\\
\beta_{i,i+1}^E\Big( \tor^{S}(M,k) \Big)& =\beta_{2i+1}^R(M)
\, .
\end{align*}
(Theorem \ref{resTor}).

\citem{iv} We show that the numerical statement in (iii) is a consequence of the
structure of the  minimal $E$-free resolution of $\tor^S(M,k)$ by proving that 
the resolution  is the mapping
cone:
 \begin{center} 
\begin{tikzpicture} [scale = 0.8, everynode/.style = {scale = 0.9}]
\node (DE)  {$\dots$}; 
\node (4)[node distance=2.7cm, right of=DE]       {$\Tor^{R}_{4}(M,k)\otimes_{R} E$}; 
\node (2)   [node distance=4cm, right of=4]      {$ \Tor^{R}_{2}(M,k)\otimes_{R} E$}; 
\node (0)  [node distance=4cm, right of=2]       {$\Tor^{R}_{0}(M,k)\otimes_{R}E$}; 

\node (OD)  [node distance=1cm, below of=DE]   {$\dots$}; 
\node (O1)    [node distance=1cm, below of=4]      {$\bigoplus$}; 
\node (O2)    [node distance=1cm, below of=2]      {$\bigoplus$}; 
\node (O3)     [node distance=1cm, below of=0]     {$\bigoplus$};  

\node (DO)  [node distance=1cm, below of=OD]  {$\dots$}; 
\node (5)    [node distance=1cm, below of=O1]      {$\Tor^{R}_{5}(M,k)\otimes_{R}E$}; 
\node (3)    [node distance=1cm, below of=O2]      {$\Tor^{R}_{3}(M,k)\otimes_{R}E$}; 
\node (1)      [node distance=1cm, below of=O3]     {$\Tor^{R}_{1}(M,k)\otimes_{R}E\,,$}; 

\draw[->] (DE) to node [above=10pt, right=-8pt]  {$t_{2} $} (4);
\draw[->] (4) to node [above=10pt, right=-8pt]  {$t_{2} $} (2);
\draw[->] (2) to node [above=10pt, right=-8pt]  {$t_{2} $} (0);

\draw[->] (DO) to node [above=10pt, right=-8pt]  {$t_{2} $} (5);
\draw[->] (5) to node [above=10pt, right=-8pt]  {$t_{2} $} (3);
\draw[->] (3) to node [above=10pt, right=-8pt]  {$t_{2} $} (1);

\draw[->] (5) to node [above=6pt, right=-15pt]  {$t_{3} $} (2);
\draw[->] (3) to node [above=6pt, right=-15pt]  {$t_{3} $} (0);

\end{tikzpicture} \end{center}
(Theorem~\ref{main7}, see also Theorem~\ref{twores} (iii)) where the two rows are themselves minimal linear free resolutions of the 
$E$-submodule $T'$ 
 and
the quotient $T''$. 
The maps labeled  $t_{2}$ are the CI (=Complete Intersection) operators (also called Eisenbud operators), while the maps labeled  $t_3$ between the two strands are some of the higher CI-operators, introduced in Section~\ref{inverseShamash}.  

\end{list}

\noindent 
A curious consequence of (iv) is that if $M$ is a high syzygy over $R$, then the   \emph{first} syzygy over $E$ of the $E$-module $\tor^{S}(M,k)$ is $\tor^{S}(L,k)$, where $L$ is the \emph{second} syzygy of $M$ as an $R$-module. For a slightly sharper statement see Corollary~\ref{first syzygy}.

\vglue .25cm
Next we focus on $\ext_{R}(M,k)$.  
The action of the CI operators makes the graded vector space $\ext_{R}(M,k)$ into a finitely generated module
over the ring $$ \CR:= \sym_{k}\left((I/\gm I)^\vee\right) =: k[\chi_{1},\dots,\chi_{c}]\,.$$ 
In Theorem~\ref{res of ext} we prove  that
when $M$ is a high $R$-syzygy,  the minimal $\CR$-free resolution of 
$\ext_R^{\even}(M,k)$ is obtained by the Bernstein-Gel'fand-Gel'fand (BGG) correspondence from the $E$-module structure of $T'^\vee$, and similarly for
$\ext_R^{\odd}(M,k)$ and $T''^\vee$.

Corollary~\ref{nonminimal presentation} doesn't even require the definition of $T'$. Write 
$$
\mu: E_{1}\otimes_{k}\tor^{R}_{0}(M,k) \to  \tor^{R}_{1}(M,k)
$$
 for the multiplication map
and 
$$
\mu^{\vee}: \ext_{S}^{1}(M,k)\to  \ext_{S}^{1}(M,k) \otimes \CR_{1}
$$
for its vector space dual.
The $\CR$-module $\ext_{R}^{\even}(M,k)$ then has  the (non-minimal) linear free presentation 
$$
\ext_{S}^{1}(M,k)\otimes \CR(-1) \map{\tau}  \hom(M,k) \otimes \CR \to \ext_{R}^{\even}(M,k)\to 0
$$
where $\tau$ is the map of free modules whose linear part is $\mu^\vee$. This follows from  Theorem~\ref{res of ext} because $\mu^\vee$ is 0 on the submodule $T''^\vee$.

An essential ingredient in the proofs in Section~\ref{Identifying the Ext Module} is a new theory of \emph{higher CI operators}, introduced in Section~\ref{inverseShamash}.  
Just as the Eisenbud-Shamash construction allows one to describe an $R$-free resolution of any $R$-module from the higher homotopies on an $S$-free resolution, one can describe an $S$-free resolution from the higher CI-operators on an $R$-free resolution. This construction was discovered independently by Jessie Burke~\cite{Burke}.  The  differentials in the $E$-free resolution of $\tor_S(M,k)$ are related, as above, to the higher CI-operators.

We also use
the ``layered" structures of the minimal $S$-free and $R$-free resolutions of $M$ \cite{EP1}, which come from the higher matrix factorizations of~\cite{EP}.
We  review the  necessary definitions and results about layered resolutions in Section~\ref{layered res}.

\vskip.1cm
\noindent{\bf Remark.} 
One could often replace the hypothesis that $S$ is regular with a hypothesis that $S$ is Gorenstein and $M$ has finite projective dimension over $S$, or  that $M$ is the module of a minimal higher matrix factorization. Moreover, the hypothesis that $M$ is the module of a minimal higher matrix factorization could be replaced  by the (possibly) more general hypothesis that the layered resolutions described in Section~\ref{layered res} are minimal. We leave these refinements to the interested reader. 
\vskip.1cm
\goodbreak

 The following example is computed using 
Macaulay2 code, which 
may be found in the documentation for the function ``exteriorTorModule" in the
Macaulay2 package CompleteIntersectionResolutions.m2 \cite[Version 1.9.1 and higher]{M2}.

\begin{example}\l{introex}
Let $S = k\llbracket x_1,x_2, x_{3}\rrbracket $, and $R = S/(x_{1}^{3}, x_{2}^{3}, x_{3}^{3})$. Denote by
$N_{i}$  the $i$-th syzygy of $k$ as an $R$-module.
 The minimal $S$-free resolution of $N_0=k $
is the Koszul complex on  $x_1,x_2, x_3$, and 
$x_{i}^{2}$ times the homotopy for $x_{i}$ is a homotopy $\sigma_{i}$ for $f_{i} := x_i^3$. Thus the action of $E$ on $\tor^{S}(k,k)$ is trivial.

By contrast, the action of $E$ on $\tor^{S}(N_{i},k)$ is nontrivial for $i\geq 1$. The beginnings of the Betti tables of the minimal $E$-free resolutions of $\tor^{S}(N_{i},k)$ for $i = 1,2,3$ are:
\begin{align*}
\hbox{Betti table of }\tor^{S}(N_{1},k): \ & \quad
\begin{matrix}
{\rm total:}& 10&27&52&85&126&175\cdots\\
\hline
0:&3&9&18&30&45&63&\cdots\\
1:&6&15&28&45&66&91&\cdots\\
2:&1&3&6&10&15&21&\cdots
\end{matrix}
\\ \\
\hbox{Betti table of }\tor^{S}(N_{2},k): \ & \quad
\begin{matrix} 
{\rm total:}& 16 &36& 64& 100& 144& 196&\cdots\\
\hline
0:&6&15&28&45&66&91&\cdots\\
1:&10& 21 &36 & 55 & 78 &105&\cdots
\end{matrix}
\\ \\
\hbox{Betti table of }\tor^{S}(N_{3},k): \ & \quad
\begin{matrix} 
{\rm total:}& 25 &49 &81 &121 &169 &225&\cdots\\
\hline
0:&10& 21 &36 & 55 & 78 &105&\cdots\\
1:&15 & 28& 45&  66&  91 &120&\cdots
\end{matrix}
\end{align*}

From the first table we see that, as an $E$-module, $\tor^{S}(N_{1},k)$ is
generated in degrees 0,1,2. Since $N_1$ is artinian, we have  $\tor_{3}^{S}(N_{1},k)\neq 0$. Hence, the $E$-module structure of $\tor^{S}(N_{1},k)$ is nontrivial.

The smallest $i$ for which $N_{i}$ is a high syzygy (in the sense of \cite{EP}) is $i=3$, but in fact the \emph{layered resolution} of $N_2$  with respect to $f_1, f_2, f_3$ described in Section~\ref{layered res} is also minimal. The $E$-module
 $\tor^{S}(N_{i},k)$ has a free resolution with just 2 linear strands for $i= 2,3$, illustrating assertion (i) above.
 
Further, Macaulay2 computes the Betti table of $N_{2}$ as an $R$-module
as 
$$
\begin{matrix}
{\rm total:} & 6 & 10 &15 &21 &28 &36 &45 &55 &66 &78 &91&105& \cdots\\
\hline
0:& 6 & 10 &15 &21 &28 &36 &45 &55 &66 &78 &91&105& \cdots
\end{matrix}
$$
and we see that, for $s = 0,1$,
$$
 \beta^{E}_{i,i+s}\Big(\tor^{S}(N_{2},k)\Big)=\beta^{R}_{2i+s}(N_{2}) \,,
$$
which illustrates (iii).

We can also illustrate Theorem~\ref{res of ext} in this context. It turns out that the  homogeneous components
of $T' = E \cdot\Tor_0^S(N_2,k)$ are $T'_0 = E^6,\ T'_1 = E^3,\ T'_2 = E^1$, and
it follows that
the minimal $\CR$-free resolution of $\ext_R^{\rm even}(N_2,k)$  has the form
$$
0\to \RR^1(-2) \map{d_2}\RR^3(-1) \map{d_1}\RR^6
\to\ext_R^{\rm even}(N_2,k)\to 0.
$$
 The differentials are easily computed from the action of $E$ on $\tor^S(N_2,k)$. The map 
 $$
\langle \ff 3\rangle \otimes \tor_0^S(N_2,k) \to \tor_1^S(N_2,k),
$$ 
given by the homotopies induces the map $E_1\otimes T'_0 \to T'_1$, whose
dual induces
 the map $d_1:\,\RR^3(-1) \to \RR^6$. Computation
shows that, in suitable bases, this map is given by the matrix
$$
\begin{pmatrix}
0&0&0\\
0&0&0\\
0&0&0\\
0&\chi_1&\chi_2\\
-\chi_1&0&\chi_3\\
-\chi_2&-\chi_3&0\\
\end{pmatrix}
$$
where the $\chi_i$ form a dual basis to $f_1,f_2,f_3$. From this presentation matrix we see that $\ext_R^{\rm even}(N_2,k)$ is the direct
sum of $\RR^3$ and one copy of the maximal ideal 
$(\chi_1,\chi_2,\chi_3)\subset \RR$, shifted so that
it is generated in degree 0.
Similar conclusions hold for $\ext^{\rm odd}(N_2,k)$.
\end{example}

\noindent{\bf Related Work.} Avramov and Buchweitz made  use of the simple classification of modules 
over an exterior algebra on 2 generators to study free resolutions of modules over complete intersections of codimension 2 in \cite{AB}, and this study is carried further in \cite{AZ}. For other points of view on the module structure of $\tor$ see \cite{Da, HW}. For further results on  resolutions over exterior algebras, see for example
\cite{AI, Ei2, Fl}. 

\vglue .25cm
\noindent{\bf Acknowledgements.}
Computations with Macaulay2~\cite{M2}  led us to guess the statements of our main theorems.
Many of the constructions in this paper are coded in the packages 
\href{http://www.math.uiuc.edu/Macaulay2/doc/Macaulay2-1.8.2/share/doc/Macaulay2/BGG/html/}{BGG.m2}
 and 
 \href{http://www.math.uiuc.edu/Macaulay2/doc/Macaulay2-1.8.2/share/doc/Macaulay2/CompleteIntersectionResolutions/html/}
{CompleteIntersectionResolutions.m2} distributed with the
Macaulay2 system. We want to express our gratitude to 
the authors Dan Grayson and  Mike Stillman of Macaulay2 for their unfailing patience in answering our questions about the program.

\section{Homotopies and the action of the exterior algebra on $\tor$}\label{exterior structure}\l{general tor module}

In this section we review  the action of the exterior algebra on Tor.
We will use the notation at the beginning of the Introduction.

For each $i$ we choose  a homotopy $\sigma_{i}$ for $f_{i}$ on a free resolution
$\F$ of the module $M$. 
Up to homotopy, the homotopies  $\sigma_{i}$ anticommute and square to 0---see for example \cite[Proposition 3.4.2]{EP}. Though the $\sigma_i$ are not maps of complexes,
they become maps of complexes when tensored with an $S$-module $N$
annihilated by $\ff c$, so $\sigma_{i}\otimes 1$ takes cycles in the complex
${\bf F} \otimes_{S}N$ to cycles, while raising the homological degree by 1.  Thus
the action of the $\sigma_{i}$
gives $H_{*}({\bf F}\otimes N) = \tor^{S}(M,N)$ the structure of a graded
module over the exterior algebra $\wedge_{S}(I)$. The action factors through an action of
$\wedge_S(I/I^2)$ because $I\cdot \tor^S(M,N) = 0$. 

As an example, consider the Koszul complex $\K =\K(\ff c)$. 
Denote by $e_{i}$ the basis element  of $\K_1$ that maps to $f_i$.  An immediate computation shows that multiplication by $e_i$ is a homotopy
for $f_{i}$. These homotopies anti-commute and square to 0, making
$\K$  a free module over the exterior algebra $S \otimes_{k}\wedge_{k}k^{c} = S\langle e_{1}, \dots, e_{c}\rangle$. 

The action of $\wedge(I/I^2)$ on $\tor^S(M,N)$ is functorial by  Lemma~\ref{functorial}.

\begin{lemma}\label{functorial}
Let 
$(\F,\partial): \ \cdots\to F_1\to F_0$ 
and $(\G,d):\  \cdots\to G_1\to G_0$ be complexes of  $S$ modules, 
and suppose that $\F$ and $\G$ admit homotopies $\sigma$
 for some element $f\in S$, respectively. Let $\varphi: \F\to\G$ be a map of complexes. If the $F_i$ are
free and the complex $\G$ is acyclic, then there are maps $\alpha_i: F_i\to G_{i+2}$
such that 
$\varphi\sigma-\tau\varphi = d\alpha-\alpha\partial$.

Thus,
if $N$ is  a module
annihilated by $f$, 
 the maps $\varphi\sigma$ and $\tau\varphi$ are homotopic maps of complexes; in particular,
 the maps 
\balign
\Ho(\F\otimes N)& \to \Ho(\G\otimes N)\\
\Ho(\hom(\G,N)) & \to \Ho(\hom(\F,N))
\end{align*}
induced by $\varphi$ commute with the action of the homotopies.
\end{lemma}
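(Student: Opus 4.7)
The plan is to verify by direct computation that $\varphi\sigma - \tau\varphi$ is a chain map of degree one from $\F$ to $\G$, and then to build the nullhomotopy $\alpha$ by the standard inductive lifting argument, exploiting freeness of the $F_i$ and acyclicity of $\G$.

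First I would compute, using that $d\varphi = \varphi\partial$, $\partial\sigma+\sigma\partial = f\cdot\Id_{\F}$, and $d\tau + \tau d = f\cdot\Id_{\G}$:
\begin{align*}
d(\varphi\sigma - \tau\varphi) + (\varphi\sigma - \tau\varphi)\partial
&= \varphi\partial\sigma - d\tau\varphi + \varphi\sigma\partial - \tau d\varphi \\
&= \varphi(\partial\sigma+\sigma\partial) - (d\tau+\tau d)\varphi \\
&= f\varphi - f\varphi = 0.
\end{align*}
Thus $\varphi\sigma - \tau\varphi$ anticommutes with the differentials, i.e., is a chain map of degree one.

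Second, I would construct $\alpha_i \colon F_i \to G_{i+2}$ by induction on $i$, setting $\alpha_{-1}=0$. At stage $i$, write $\beta_i := (\varphi\sigma - \tau\varphi)_i + \alpha_{i-1}\partial \colon F_i \to G_{i+1}$. Applying $d$ to $\beta_i$ and invoking the inductive hypothesis $d\alpha_{i-1} = (\varphi\sigma - \tau\varphi)_{i-1} + \alpha_{i-2}\partial$ together with the identity from Step~1 (and $\partial^2=0$) shows that every term cancels, so $\beta_i(F_i) \subset \ker(d\colon G_{i+1}\to G_i)$. By acyclicity of $\G$ in positive homological degrees this kernel coincides with $\im(d\colon G_{i+2}\to G_{i+1})$, and since $F_i$ is free we may lift to obtain the desired $\alpha_i$ satisfying $d\alpha_i - \alpha_{i-1}\partial = (\varphi\sigma - \tau\varphi)_i$. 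The base case $i=0$ is handled the same way, noting that $\partial$ vanishes on $F_0$.

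Finally, I would deduce the consequence: if $fN=0$, then $(\partial\otimes 1)(\sigma\otimes 1)+(\sigma\otimes 1)(\partial\otimes 1)=f\otimes 1=0$ on $\F\otimes N$, and likewise for $\tau\otimes 1$ on $\G\otimes N$, so $\varphi\sigma\otimes 1$ and $\tau\varphi\otimes 1$ are honest chain maps of degree one whose difference is the nullhomotopic map $(d\alpha - \alpha\partial)\otimes 1$; thus $\alpha\otimes 1$ is a null-homotopy between them, and the induced maps on $\Ho(\F\otimes N)$ commute with the action of the homotopies. The same argument applied to $\hom(-,N)$ gives the Ext statement, with $\alpha$ providing a cochain homotopy between $\varphi^{*}\tau^{*}$ and $\sigma^{*}\varphi^{*}$. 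I expect the main obstacle to be purely bookkeeping: keeping the degree-one parity of $\sigma,\tau$, and $\varphi\sigma-\tau\varphi$ consistent with the Koszul sign conventions, since beyond this the construction is the standard comparison-of-lifts argument.
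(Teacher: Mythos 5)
Your proof is correct and is precisely the ``immediate computation'' that the paper leaves to the reader: verify that $\varphi\sigma-\tau\varphi$ anticommutes with the differentials, then build $\alpha$ by the standard inductive lifting using freeness of the $F_i$ and acyclicity of $\G$, and finally observe that tensoring with $N$ (or applying $\hom(-,N)$) kills the multiplication by $f$ so that $\sigma$, $\tau$ become chain maps and $\alpha$ a genuine homotopy. No gaps.
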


The proof is an immediate computation.

For example, taking $\varphi$ to be the identity on a free resolution of $M$, we see that the induced action of $\wedge_S(I/I^2)$ on $\tor_S(M,N)$ is independent of the choice of homotopies.

We note that $\wedge_{R}(I/I^{2}) = \wedge \tor^{S}_1(R,R)$, and one can  see the above action on $\tor$ as being, up to sign, induced by the action of the algebra $\tor^{S}(R,R)$ on $\tor^{S}(M,N)$. This is a special case of the natural product $\tor^{S}(A,B)\otimes \tor^{S}(M,N) \to \tor^{S}(A\otimes B, M\otimes N)$ defined in the book of Cartan and Eilenberg \cite[Chapter XI, Section 1]{CE},
as one can prove from the fact that  homotopies on a tensor product complex can be defined from  homotopies
on one factor. In particular, the $E$-module structure on $\tor^{S}(M,N)$ computed from homotopies on a resolution of $M$ is, up to sign,
the same as that computed from a resolution of $N$. We will not use these facts.

In this paper we focus on the structure of $\tor^{S}(M,k)$, where 
$S$ is a regular  local ring with residue field $k$, and $M$ is annihilated by $\ff c\in S$. Since $\tor^{S}(M,k)$ is annihilated by the maximal ideal, it may be regarded as a graded module over the exterior algebra
 $$E:= \wedge_{k}(I/\gm I) =: k\langle e_{1}, \dots e_{c}\rangle\,.$$

\section{Layered resolutions} \label{Sreview}\label{layered res}

Continuing with the notation of the introduction, we consider a regular local ring $S$  and a  complete intersection $R = S/(\ff c)$  of codimension $c$.  Throughout the paper all the modules are assumed  finitely generated. 

Let $M$ be a Cohen-Macaulay $S$-module of codimension $c$
and let ${\bf f}:=\ff c$ be a regular sequence in the annihilator of $M$. 
In~\cite{EP1} we construct an $S$-free resolution $\lup^S(M,\f)$
and an $R$-free resolution $\ldown_R(M,\f)$, called \emph{layered resolutions} of $M$. In this section we recall the features of the construction
 that will play a role in this paper.

The importance  of the layered resolutions comes from the following result:
\bthm \ 
\begin{enumerate}
\item If $M$ is the module of a minimal  higher matrix factorization with respect to $\ff c$ in the sense of~\cite[Definition~1.2.2]{EP}, then 
$\lup^S(M,\f)$ and $\ldown_R(M,\f)$ are minimal resolutions. 

\item
Suppose that the ground field $k$ is infinite. 
If $M$ is a sufficiently high $R$-syzygy of an $R$-module $N$ 
and the elements $f_1',\dots ,f_c'$ are sufficiently general among generators of $(\ff c )$, 
then $M$ is the module of a minimal higher matrix factorization with respect to $f_1',\dots ,f_c'$.
\end{enumerate}
\ethm

\begin{proof} See \cite[Theorems~1.3.1, 3.1.4, 5.1.2]{EP} and \cite{EP1}.
\end{proof}

\vglue .25cm
\noindent{\bf The $S$-free layered resolution.}
First we will review that  $\lup^{S}(M,\f)$ has a filtration by acyclic free subcomplexes that are  resolutions of maximal Cohen-Macaulay modules over
the intermediate rings $R(p):= S/(\ff p)$.
Let $M'(p)\to M$ be   the maximal Cohen-Macaulay $R(p)$-approximation of $M$
in the sense of \cite{AB}. We  may write
$M'(p) = M(p)\oplus R(p)^{m_p}$, where $M(p)$ has no free summand.
Following \cite[Definition 7.3.1]{EP} we call
$M(p)$  the {\it essential MCM approximation} of $M$ over $R(p)$.
Let $\L(p) := \lup^{S}(M(p), \ff p)$ be the layered $S$-free resolution of $M(p)$.
By \cite[Corollary~7.3.4]{EP} the essential Cohen-Macaulay approximation of $M(p)$ over $R(p-1)$ is $M(p-1)$, and thus we have maps 
\bequation\l{mapscm}
0=M(0)\to M(1)\to \dots \to M(c)=M\,.
\end{equation}
They induce inclusions of complexes
$$
0=\L(0)\subset \L(1)\subset \dots \subset \L(c)=:\L\,,
$$
with quotients
 \bequation
 {\bf L}(p)/{\bf L}(p-1)=\K(f_1,\dots ,f_{p-1})\otimes_S \B(p)\,,
 \l{exseqtwo} \end{equation}
where $\K(f_1,\dots ,f_{p-1})$ is the Koszul complex on $f_1,\dots ,f_{p-1}$,
and  $\B(p)$ is a two-term free complex of the form 
\bequation\l{bcomplex}
\B(p): B_1(p) \map{b_{p}} B_0(p)\,.
\end{equation}

Thus, as free $S$-modules,
 \bequation
 {\bf L}(p)={\bf L}(p-1)\oplus
S\langle  e_1,\dots , e_{p-1}\rangle\otimes_S {\bf B}(p)\, .
 \l{exseqone} \end{equation}
In particular $$A_{0}(p) := \oplus_{q=1}^pB_0(q)= \L(p)_0 \,,$$ while
$$A_{1}(p) := \oplus_{q=1}^pB_1(q)\subset \L(p)_1$$ is a summand of $\L(p)_1$.

Let 
$$
E(p) := E/(e_{p+1}, \dots, e_{c})=k\langle e_1,\dots ,e_p\rangle.
$$
From Lemma~\ref{functorial} we deduce:

\blemma\l{incl}
For $1\le p\le c-1$, the inclusion ${\bf L}(p)\subset {\bf L}$ induces an inclusion
$$\tor_S(M(p),k)\subset  \tor_S(M,k)$$  of $E(p)$-modules.
\elemma

We will make use of the following property:

\bprop\l{revan} If $M$ is the module of a minimal higher matrix factorization with respect to $ \ff c$, then
the homotopy $\sigma_p$ for $f_p$ on $\L(p)$  can be chosen so that its
component 
$$
h_p:\  \L(p)_0=\oplus_{q = 1}^p B_0(q) \to \oplus_{q = 1}^p B_1(q)\subset \L(p)_1
$$
is minimal.
\eprop

\mproof
By \cite[Theorem~5.3.1]{EP}, the  higher matrix factorization $(d,h)$ for $M$ and the homotopy $\sigma_p$ for $f_p$ can be chosen so that
$
h_p
$
 is a component of $h$.
 Furthermore, \cite[Theorem~5.1.2]{EP} shows that $R\otimes h$ is the second differential in the minimal $R$-free resolution of $M$ over $R$.
Hence, $h$ is minimal.
  \mqed

\vglue .25cm
As a complex, $\L(p)$ is a \emph{Koszul extension} of $\L(p-1)$ in the sense of \cite[Definition~3.1.1]{EP}. By definition, this is the mapping cone of a map of complexes
$$
\Psi: \K(f_1,\dots ,f_{p-1})\otimes_S \B(p)[-1] \to \L(p-1)
$$
that is zero on 
$\K(f_1,\dots ,f_{p-1})\otimes_S B_0(p)$.

\begin{example} We illustrate the constructions above in the codimension 2 case.
When $c=2$, the resolution $\lup^{S}(M,f_{1},\,f_{2})$  may be represented by the diagram:
\begin{center}
\begin{tikzpicture}  
 [every node/.style={scale=1},  auto]
  \node (TL) {$B_0(1)$};
  \node(TLa) [node distance=1cm, below of=TL]{$\oplus$};
    \node (TM) [node distance=4cm, left of=TL] {$B_1(1)$};
   \node(TMa) [node distance=1cm, below of=TM]{$\oplus$};
    \node (M) [node distance=2cm, below of=TM] {$B_1(2)$};
  \node(Ma) [node distance=1cm, below of=M]{$\oplus$};
       \node (L) [node distance=2cm, below of=TL] {$B_0(2)$};
       \node (BM) [node distance=2cm, below of=M] {$e_1B_0(2)$};
        \node (BL) [node distance=4cm, left of=BM] {$e_1B_1(2)$};
   \draw[->] (TM) to node  {$b_1$} (TL);
     \draw[->] (M) to node [above=10pt, left=-10pt] {$b_2$} (L);
      \draw[->] (M) to node [above=5pt, right=-20pt]  {$\psi_2$} (TL);
     \draw[->] (BL) to node [swap] {$b_2$} (BM);
  \draw[->] (BL) to node [above=12pt, right=-8pt] {$f_1$} (M);
   \draw[->] (BL) to node {} (TM);
  \draw[->] (BM) to node [above=-5pt, right=5pt]{$-f_1$} (L);
\end{tikzpicture}
  \end{center}
In the notation above $\L(1)$ is the 2-term complex 
$$
 B_{1}(1) \map{b_{1}} B_{0}(1)
$$
and 
$\K(f_{1})\otimes \B(2)$ is the complex
\begin{center}
\begin{tikzpicture}  
 [every node/.style={scale=1},  auto]
\node (L)  {$B_0(2)\,,$};
\node (M) [node distance=4cm, left of=L] {$B_1(2)$};
\node(Ma) [node distance=1cm, below of=M]{$\oplus$};
\node (BM) [node distance=2cm, below of=M] {$e_1B_0(2)$};
\node (BL) [node distance=4cm, left of=BM] {$e_1B_1(2)$};
     \draw[->] (M) to node [above=10pt, left=-10pt] {$b_2$} (L);
     \draw[->] (BL) to node [swap] {$b_2$} (BM);
  \draw[->] (BL) to node [above=12pt, right=-8pt] {$f_1$} (M);
  \draw[->] (BM) to node  [above=-5pt, right=5pt]{$-f_1$} (L);
\end{tikzpicture}
\end{center}
where $e_{1}$ denotes the basis element of $I/\gm I$ corresponding
to $f_{1}$ and we have written  three underlying free modules of
$\K(f_{1})\otimes \B(2)$, in homological degrees 2,1,0,
as
$$
  e_{1}B_{1}(2)\to
 B_{1}(2)\oplus e_{1}B_{0}(2)
\to B_{0}(2)\, .
$$ 
\end{example}

\def\T{{\bf T}}

\vglue .25cm
\noindent{\bf The $R$-free layered resolution.} 
The maps (\ref{mapscm}) induce 
inclusions of complexes
$$
0=\T (0)\subset  R\otimes \T (1)\subset \dots \subset  R\otimes \T (c)=:\T \,,
$$
where $\T(p)$ is the layered resolution of $M(p)$ over $R(p)$.
By \cite{EP1}, $\T(p+1)$ is obtained from $\T(p)$ by the Shamash construction applied to the box complex
\vglue .3cm
 \begin{center}
\begin{tikzpicture}  
 [every node/.style={scale=1},  auto]
\node(00){ };
\node(000)[node distance=3.5cm, right of=00]{$\cdots \to T(p)_{2}$};
\node(00a)[node distance=.7cm, below of=00]{};
\node(01)[node distance=3.1cm, right of=000]{$T(p)_{1}$};
\node(02)[node distance=4.5cm, right of=01]{$T(p)_{0}$};
\node(11a)[node distance=.7cm, below of=01]{$\oplus$};
\node(12a)[node distance=.7cm, below of=02]{$\oplus$};
\node(lab)[node distance=10cm, left of=12a]{$T'(p):$};
\node(11)[node distance=1.4cm, below of=01]{$R(p)\otimes  B_{1}(p)$};
\node(12)[node distance=1.4cm, below of=02]{$R(p)\otimes B_{0}(p)\,,$};
\draw[-angle 60] (000) to node  {$\partial_2$} (01);
\draw[-angle 60] (01) to node [ pos=.4]{$\partial_{1}$} (02);
\draw[-angle 60] (11) to node [below=1pt]{$b_p$} (12);
\draw[-angle 60] (11) to node [above=5pt, left=1pt] {$\psi_0$} (02);
\end{tikzpicture}
\end{center}
where $\B(p)$ is the two-term complex from (\ref{bcomplex}).
In particular, the following property holds.

\bprop\l{extprop} The CI operator $t_{j}: \T \to \T$ for $f_{j}$ on the layered resolution $\T$
can be chosen so that, for $j\le p$, it preserves the box complex ${\bf T}'(p)$ as a subcomplex of $\T$, and  its  components 
\balign
R\otimes T(p)_2 &\to R\otimes B_{0}(p)\\
R\otimes T(p)_3 &\to R\otimes B_{1}(p)\,
\end{align*}
are zero.

The dual maps $\chi_{j}: \hom(\T,R) \to \hom(\T,R)$ then
vanish on $\hom_{R}(B_{0}(p), R)$ and $\hom_{R}(B_{1}(p), R)$.
\eprop

\section{The structure of \tor }\label{S Tor Structure}

Throughout this section, as in the introduction, we  assume that $S$ is a regular local ring, $R = S/(\ff c)$ is a complete intersection of codimension $c$,
and modules are finitely generated. Write $\f := \ff c$.
We consider a module $M$ that  is the module of a minimal higher matrix factorization with respect to $\f$.
We write
$
T' := E\cdot \tor_{0}^{S}(M,k)
$
for the $E$-submodule of $\tor^{S}(M,k)$ generated by $\tor_{0}^{S}(M,k)$ and set
$T'' : =  \tor^{S}(M,k)/T'\,.$
For each module in the short exact sequence
$$
0\to T' \to \tor^{S}(M,k) \to T'' \to 0\,,
$$
 we will identify a vector space decomposition, the minimal generators as an $E$-module, a Gr\"obner basis for the relations, and the
ranks of the free modules in a minimal $E$-free resolution. In Section~\ref{Identifying the Ext Module} we will determine the structure of the resolutions themselves.

\begin{notation}\l{basic notation}
In addition to the notation and hypotheses above, we  adopt the notations $R(p), M(p), \L(p), B_{0}(p), B_{1}(p), E(p)$ of Section~\ref{layered res}. 
We set
$$
A_s(p) = \oplus_{q=1}^pB_s(q)
$$
 for $s=0,1$, as in Section~\ref{Sreview}, and $A_s:=A_s(c)$. 
We write
$\overline{-}$ for $k\otimes -$.  
\end{notation}

Since the differential
$B_1(p) \to B_0(p)$ is minimal, we may regard
$\overline{\B(p)}$ as the direct sum $\overline{B_{1}(p) \oplus B_{0}(p)}$.

The next result  is the key to the $E$-module structure of Tor:

\begin{thm}\label{decomposition}
Let the notation and hypotheses be as in~\ref{basic notation}. For $0\leq p \leq c-1$ there is an isomorphism of $E(p)$-modules, 
$$
\tor^S(M(p+1),k)  
\cong  \tor^S(M(p),k)  \oplus \biggl(E(p)\otimes_k \overline{B_0(p+1)}\biggr) \oplus \biggl(E(p)\otimes_k \overline{B_1(p+1)}\biggr)\,,
$$
where the action of $E(p)$ on the second and third summands is by
multiplication on the tensor factor $E(p)$.
\end{thm}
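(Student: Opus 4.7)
The plan is to extract a short exact sequence of $E(p)$-modules from the Koszul-extension sequence of complexes (equation~\ref{exseqtwo})
\[
0 \to \L(p) \to \L(p+1) \to Q \to 0, \qquad Q := \K(f_1,\dots,f_p)\otimes_S \B(p+1),
\]
and then to split it via projectivity of free $E(p)$-modules. The underlying vector-space identification is immediate from equation~\ref{exseqone}: since $\L(p+1) = \L(p) \oplus S\langle e_1,\dots,e_p\rangle \otimes_S \B(p+1)$ as a graded $S$-module, and $\L(p+1)$ is minimal by the higher matrix factorization hypothesis on $M$, one obtains $\tor^S(M(p+1),k) = \overline{\L(p+1)} \cong \tor^S(M(p),k) \oplus E(p) \otimes_k (\overline{B_0(p+1)} \oplus \overline{B_1(p+1)})$ as graded vector spaces.

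To upgrade this to an $E(p)$-module identification, I would first fix homotopies $\sigma_j^{(p)}$ on $\L(p)$ (giving the $E(p)$-action on the left Tor) and take multiplication by $e_j$ as the natural $f_j$-homotopy on $\K(f_1,\dots,f_p)$ from the Koszul algebra structure, extended to $Q$ via the first tensor factor. I would then lift these to homotopies $\sigma_j^{(p+1)}$ on $\L(p+1)$ of upper-triangular block form
\[
\sigma_j^{(p+1)} = \begin{pmatrix} \sigma_j^{(p)} & B_j \\ 0 & e_j \end{pmatrix},
\]
where the cross-term $B_j : Q \to \L(p)$ is obtained by solving a linear equation of the shape $d_{\L(p)} B_j \pm B_j d_Q = -\sigma_j^{(p)}\Psi - \Psi e_j$, with $\Psi$ the attaching map of the Koszul extension. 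Existence of $B_j$ relies on the acyclicity of the free resolution $\L(p)$ of $M(p)$ in positive degrees.

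With these compatible homotopies, the inclusion $\L(p)\hookrightarrow\L(p+1)$ and the projection $\L(p+1)\twoheadrightarrow Q$ strictly commute with the $f_j$-homotopies. Tensoring with $k$---which annihilates every differential by minimality---then produces an honest short exact sequence of $E(p)$-modules
\[
0 \to \tor^S(M(p),k) \to \tor^S(M(p+1),k) \to E(p) \otimes_k \bigl(\overline{B_0(p+1)} \oplus \overline{B_1(p+1)}\bigr) \to 0,
\]
in which the $E(p)$-action on the quotient is multiplication on the $E(p)$-factor. Because the quotient is a free, and hence projective, $E(p)$-module, this sequence splits, yielding the claimed isomorphism.

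The main obstacle is the construction of $B_j$: one must verify that the cocycle $-\sigma_j^{(p)}\Psi - \Psi e_j$ is a coboundary in $\hom(Q, \L(p))$, an argument that mixes sign conventions from the mapping cone description of $\L(p+1)$ with the free-resolution property of $\L(p)$. Everything else is essentially formal, once this homotopy extension is in hand.
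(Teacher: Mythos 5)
Your argument is correct, and it shares the paper's basic mechanism---realize $\L(p+1)$ as a mapping cone over $\L(p)$, make the homotopies for $\ff p$ compatible with the resulting short exact sequence of complexes, tensor with $k$ (minimality kills all differentials, so the long exact sequence degenerates), and split off a free quotient---but it takes a slightly different and in fact more economical route. The paper first invokes the defining property of a Koszul extension, namely that the attaching map $\psi$ vanishes on $\K(\ff p)\otimes_S B_0(p+1)$, to rewrite $\L(p+1)$ as the cone of a map into the \emph{direct sum of complexes} $\L(p)\oplus \bigl(\K(\ff p)\otimes_S B_0(p+1)[-1]\bigr)$; the quotient is then only $\K(\ff p)\otimes_S B_1(p+1)$, and Lemma~\ref{functorial} (rather than an explicit lift) gives compatibility of the homotopies with the inclusion and projection on homology. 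You keep the whole quotient $Q=\K(\ff p)\otimes_S\B(p+1)$, observe that $\overline{Q}=E(p)\otimes_k\overline{\B(p+1)}$ is still $E(p)$-free, and let projectivity do all the splitting, so you never need the vanishing of $\psi$ on the $B_0$ part. Two remarks: (a) the existence of your cross-terms $B_j$ is precisely the content (and proof) of Lemma~\ref{functorial} applied to the chain map $\Psi$ with free source and acyclic target $\L(p)$, so that step is secure, though you could simply cite the lemma and avoid the block-matrix sign bookkeeping; (b) your proof exhibits the copy of $E(p)\otimes_k\overline{B_0(p+1)}$ only up to a choice of splitting of the free quotient, whereas the paper's rearrangement realizes it as an honest submodule (the homology of a direct summand subcomplex), which is the form used later in identifying $T'$. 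For the theorem as literally stated, your version is complete.
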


\mproof
The minimal free resolution $\L(p+1)$ of $M(p+1)$ is  a Koszul extension of $\L(p)$ by $\K(\ff p)\otimes_S \B(p+1)$, that is, the mapping cone of a map
$$
\psi: \K(\ff p)\otimes_S \B(p+1)[-1] \to \L(p)
$$
 such that the induced map
$\K(\ff p)\otimes_S B_0(p+1) \to \L(p)$ is zero,
as in \cite[3.1.1]{EP}. 

It follows that we may also regard $\L(p+1)$ as the  mapping cone of a  map
$$
\psi': \K(\ff p)\otimes_S B_1(p+1)[-1] \to \L(p) \oplus \K(\ff p)\otimes_S B_0(p+1)[-1].
$$
where the target complex is a direct sum, as complexes.
We can define homotopies for $\ff p$ on $\K(\ff p)\otimes_S B_0(p+1)$ and on $\K(\ff p)\otimes_S B_1(p+1)$ by 
simple multiplication on the tensor factor $\K(\ff p)$. 

We can apply Lemma~\ref{functorial} to  each of the maps in the exact sequence of the mapping cone
\begin{align*}
 0\to \L(p) \oplus (\K(\ff p)\otimes_S &B_0(p+1)) \to \L(p+1) \\
                 &\to \K(\ff p)\otimes_S B_1(p+1)\to 0\,.
\end{align*}
Thus, tensoring over $S$ with the residue field $k$, we get an exact sequence of $E(p)$-modules
\begin{align*}
 0\to \overline{\L(p)} \oplus (E(p)\otimes_{k} \overline{B_0(p+1)}) & \to \overline{\L(p+1)}\\
 & \to E(p)\otimes_{k} \overline{B_1(p+1)}\to 0\,.
\end{align*}
Since $E(p)\otimes_k \overline{B_1(p+1)} $ is a free $E(p)$-module, the sequence splits, as claimed.
\mqed

Note that Theorem~\ref{decomposition} does not assert that the ``obvious'' copy of $E(p)\otimes_k \overline{B_{1}(p+1)}$ in $\tor^{S}(M,k)$ is a submodule, but  only that there is 
a submodule isomorphic to it.

Taking $p=c$, we get:

\begin{cor} \l{decomp}\l{directs}
$$
\tor^{S}(M,k) = \bigoplus_{p=1}^{c} E(p-1)\otimes_{k} \overline{{\bf B}(p)}
$$
as vector spaces.
The subsapce $E(p-1)\otimes_{k} \overline{{\bf B}(p)}$ is an $E(p-1)$-submodule
 and the action of
 $E(p-1)$
 is via the left tensor  factor.

In particular, $\tor^{S}(M,k)$ is generated as an $E$-module in degrees
$0$ and $1$, by
 $$ \overline {A_{0}\oplus A_{1}} = \bigoplus_{p=1}^{c} \,\overline{{\bf B}(p)}\,. 
$$ 
\mqed
\end{cor}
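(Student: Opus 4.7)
The plan is to prove the corollary by induction on $p$, iteratively applying Theorem~\ref{decomposition} along the chain $M(0) = 0, M(1), \ldots, M(c) = M$. The inductive claim will be that, for each $0 \le p \le c$,
\[
\tor^{S}(M(p),k) \;=\; \bigoplus_{q=1}^{p} E(q-1)\otimes_{k} \overline{\B(q)}
\]
as a vector space, and that each summand $E(q-1)\otimes_{k}\overline{\B(q)}$ is an $E(q-1)$-submodule on which $E(q-1)$ acts through the left tensor factor.

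The base case $p=0$ is trivial since $M(0)=0$. For the inductive step I would invoke Theorem~\ref{decomposition} at index $p$ to obtain an isomorphism of $E(p)$-modules
\[
\tor^{S}(M(p+1),k) \;\cong\; \tor^{S}(M(p),k)\;\oplus\;\bigl(E(p)\otimes_{k}\overline{B_{0}(p+1)}\bigr)\;\oplus\;\bigl(E(p)\otimes_{k}\overline{B_{1}(p+1)}\bigr).
\]
Because the differential of $\B(p+1)$ is minimal, $\overline{\B(p+1)}=\overline{B_{0}(p+1)}\oplus\overline{B_{1}(p+1)}$, so the two new pieces combine into $E(p)\otimes_{k}\overline{\B(p+1)}$, which is the $q=p+1$ summand carrying the left-multiplication $E(p)$-action. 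The earlier summands, coming from $\tor^{S}(M(p),k)$, retain their $E(q-1)$-submodule structure under the inclusion $E(q-1)\subset E(p)$. Setting $p=c$ gives the vector-space decomposition claimed.

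For the generator statement, each summand $E(q-1)\otimes_{k}\overline{\B(q)}$ is generated as an $E(q-1)$-module, hence as an $E$-module, by $\overline{\B(q)}$; and $\overline{\B(q)}$ lives in homological degrees $0$ and $1$ because $\B(q)$ is the two-term complex $B_{1}(q)\to B_{0}(q)$. Summing over $q$, the module $\tor^{S}(M,k)$ is generated by $\bigoplus_{q=1}^{c}\overline{\B(q)} = \overline{A_{0}\oplus A_{1}}$, concentrated in degrees $0$ and $1$.

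The argument is essentially bookkeeping, and I expect no serious obstacle beyond being careful about which subalgebra is acting. The isomorphism in Theorem~\ref{decomposition} is asserted only as $E(p)$-modules; one cannot upgrade the summands to $E$-submodules, because the action of $e_{p+1},\dots,e_{c}$ on the new piece $E(p)\otimes_{k}\overline{\B(p+1)}$ need not respect its tensor-product form. This is precisely why the corollary gives only a vector-space direct sum, with the $E(q-1)$-action isolated on each $q$-th summand.
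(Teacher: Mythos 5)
Your proposal is correct and matches the paper's own (essentially implicit) argument: the corollary is obtained by iterating Theorem~\ref{decomposition} along the chain $M(0)=0,\dots,M(c)=M$ and taking $p=c$, using minimality of the differential of $\B(q)$ to combine the $B_0$ and $B_1$ pieces. Your cautionary remark that the summands are only $E(q-1)$-submodules, not $E$-submodules, is exactly the point the paper makes right after Theorem~\ref{decomposition}.
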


  We can now give a Gr\"obner basis of relations for $\tor^{S}(M,k)$:

\begin{thm}\label{filtration of tor} \l{gb theorem} Suppose that $M$ is the module of a  minimal higher matrix factorization for a regular sequence $\ff c$.  Let  
$1\le p\le c$. For every 
$$
a\in \overline {\B(p)}
$$
and every $r\geq p$ there is  a homogeneous relation on $\tor^{S}(M,k)$ of the form
\bequation\l{gb}
e_{r}a  - b
\end{equation}
with 
$b\in (e_1,\dots,e_{r-1})\Big(\overline {A_{0}\oplus A_{1}}\Big)$.
These relations  form  a Gr\"obner basis 
 for the relations on $\tor^{S}(M,k)$ as an $E$-module
with respect to any term order that
refines the lexicographic order on the monomials of $E$ with $e_c\succ\dots \succ e_1$.
The $E$-module defined by the leading terms of these relations is
$$
\bigoplus_{p=1}^{c}E/(e_{p},\dots, e_{c}) \otimes_{k} \overline{{\bf B}(p)}\,.
$$

\end{thm}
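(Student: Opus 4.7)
The strategy is to exhibit the claimed relations explicitly by computing $e_r\cdot a$ via the homotopies on the layered resolution $\L(r)$, and then deduce the Gr\"obner basis assertion from a dimension count using Corollary~\ref{decomp}. The existence of the relations is the main content; given these, the Gr\"obner property follows almost for free.

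To construct the relation for $a \in \overline{\B(p)}$ and $r\geq p$, I use Lemma~\ref{incl}: the inclusion $\tor^S(M(r),k) \hookrightarrow \tor^S(M,k)$ respects the action of $E(r)$, so $e_r\cdot a$ can be computed inside $\tor^S(M(r),k) = \overline{\L(r)}$. Iterating the Koszul-extension formula from Section~\ref{layered res} gives
\[
\L(r)_j \;=\; \bigoplus_{q=1}^{r} \Big( K_{j-1}(f_1,\dots,f_{q-1}) \otimes B_1(q) \;\oplus\; K_j(f_1,\dots,f_{q-1}) \otimes B_0(q) \Big).
\]
The crucial observation is that in degrees $j\ge 2$ every direct summand carries a nontrivial Koszul wedge $e_I$ with $I\subset\{1,\dots,q-1\}\subset\{1,\dots,r-1\}$, while the only summand in degree $j=1$ without such a Koszul factor is the ``base'' $A_1(r)=\bigoplus_q B_1(q)$. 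Lifting $a$ to $\hat a \in \B(p) \subset \L(r)$ and applying the homotopy $\sigma_r$ from Proposition~\ref{revan}: if $\deg a=0$, the $A_1(r)$-component of $\sigma_r(\hat a)$ is $h_r(\hat a)$, which vanishes modulo $\gm$ by minimality of $h_r$, and the remaining Koszul components contribute a sum $\sum e_i \cdot \overline{c_{i,q}}$ with $i<q\le r$; if $\deg a=1$, the entire element $\sigma_r(\hat a)\in \L(r)_2$ is Koszul, giving terms $\pm e_I \cdot \overline c$ with $\max I \le r-1$, under the standard identification of $[e_I\otimes c]$ with $\pm e_I\cdot \overline c$ in the $E$-module $\tor^S(M(r),k)$. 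Either way $e_r\cdot a$ lies in $(e_1,\dots,e_{r-1})\cdot \overline{A_0\oplus A_1}$, which yields the desired relation.

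For the Gr\"obner basis claim, let $F=E\otimes_k \overline{A_0\oplus A_1}$ be the free $E$-module whose canonical surjection to $\tor^S(M,k)$ is provided by Corollary~\ref{decomp}. Each constructed relation has leading term $e_r\cdot a$, since in any term order refining the lex order on $E$-monomials the monomial $e_r$ dominates every $e_I$ with $\max I\le r-1$. The submodule of leading terms is $\bigoplus_p (e_p,\dots,e_c)\cdot \overline{\B(p)}$, with quotient $\bigoplus_p (E/(e_p,\dots,e_c))\otimes_k \overline{\B(p)}$ of total dimension $\sum_p 2^{p-1}\dim \overline{\B(p)} = \dim \tor^S(M,k)$, the last equality again by Corollary~\ref{decomp}. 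Since $F$ modulo the submodule generated by our relations surjects onto $\tor^S(M,k)$, comparing dimensions with $F$ modulo the leading-term submodule forces both inequalities to collapse. Hence the relations generate the entire relation module and their leading terms generate the initial module, establishing both the Gr\"obner basis property and the stated description of the initial module.

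The main obstacle is the structural calculation with $\sigma_r$ on $\L(r)$ in the middle paragraph, specifically verifying that the iterated Koszul decomposition of $\L(r)_j$ splits cleanly into a ``base'' piece controlled by the minimality assertion in Proposition~\ref{revan} and Koszul pieces that automatically carry wedge factors with indices in $\{1,\dots,r-1\}$. After this structural point is in hand, the Gr\"obner basis assertion reduces to the numerical identity already available from Corollary~\ref{decomp}.
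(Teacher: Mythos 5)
Your proposal is correct and follows essentially the same route as the paper: reduce to $\tor^S(M(r),k)$ via Lemma~\ref{incl}, obtain the relation from the fact that this module is generated over $E(r-1)$ by $\overline{A_0(r)\oplus A_1(r)}$ (which you make explicit by tracking $\sigma_r$ through the Koszul decomposition of $\L(r)$), rule out a constant term in $b$ via the minimality of $h_r$ from Proposition~\ref{revan}, and conclude the Gr\"obner property by the dimension count from Corollary~\ref{decomp}. The only difference is presentational: the paper deduces existence of the relation abstractly from Corollary~\ref{decomp} rather than re-deriving the splitting of $\L(r)_j$.
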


\mproof
By Lemma~\ref{incl} it suffices to consider the $E(r)$-module $H(\overline{\L(r)})$.
Corollary~\ref{decomp} shows that this module can be written as
$$
\tor^S(M(r),k)\cong H(\overline{\L(r)} )= E(r-1)\cdot \biggl(\overline{A_1(r)\oplus A_0(r)}\biggr).
$$
It follows that there exists a  relation on $\tor^{S}(M,k)$ of the form
$
e_{r}a  - b
$
with $b\in E(r-1)\Big(\overline {A_{0}(r)\oplus A_{1}(r)}\Big)$.

If
$b$ had a non-zero component in  $\overline { A_0\oplus A_{1}}$, then we would have $a\in \overline{B_{0}(p)}$, which contradicts
Proposition~\ref{revan}. 
Thus $e_ra$ is the leading term of the relation in the monomial order $\succ$.
 
If we factor out these leading terms from 
the free $E$-module generated by $\overline{A_{0}\oplus A_{1}}$ we obtain
the module 
$$\bigoplus_{1\le p\le c}\, E(p-1) \otimes_k \overline{{\bf B}(p)}\,.$$
By Corollary~\ref{decomp}, this  has the same vector space dimension
as the $E$-module $\tor^{S}(M,k)$.
Therefore, the given relations form a Gr\"obner basis for the module of relations.
\mqed

We can now prove assertions (i) and (ii) of the Introduction:

\begin{thm}\l{twores} 
Suppose that $M$ is the module of a minimal higher matrix factorization for a regular sequence $\ff c$.
\begin{enumerate}
\item The submodule $T' := E\cdot \tor_{0}^{S}(M,k)$ has underlying vector space
$$
T' = \bigoplus_{p=1}^{c} E(p-1)\otimes_k \overline{B_{0}(p)}, 
$$
and thus the quotient $T'' := \tor^{S}(M,k)/T'$ has underlying vector space 
$$
T''\cong \bigoplus_{p=1}^{c} E(p-1)\otimes_k \overline{B_{1}(p)}, 
$$
where, for $s=0,1$, the action of $E(p-1)$ on the summand
$E(p-1)\otimes_k \overline{B_{s}(p)}$ is by multiplication on the left tensor factor.

\item The module $T'$ is generated by $\overline{A_{0}}$, in degree 0, while
$T''$ is generated by $\overline{A_{1}}$, in degree 1. Both $T'$ and $T''$
have linear $E$-free resolutions.

\item
The minimal free resolution of $\tor^{S}(M,k)$ as an $E$-module
is the mapping cone of a map from the minimal free resolution of $T''$ (shifted by -1)
to the minimal free resolution of $T'$.

\item The relations given in (\ref{gb}) with $a\in \overline{A_{0}}$
form a
Gr\"obner basis of the relations on the $E$-module $T'$, and those with
$a\in \overline{A_{1}}$ form a 
Gr\"obner basis of the relations on the $E$-module $T''$.
\end{enumerate}
\end{thm}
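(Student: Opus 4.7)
The plan is to deduce the four assertions in order, building on the Gr\"obner basis structure of $\tor^S(M,k)$ established in Theorem~\ref{gb theorem} and the vector space decomposition of Corollary~\ref{directs}. Set $\mathcal{B}_s := \{e_I \cdot a : a \in \overline{B_s(p)},\ I \subseteq \{1,\ldots,p-1\},\ 1 \leq p \leq c\}$ for $s = 0, 1$; by Theorem~\ref{gb theorem}, $\mathcal{B}_0 \cup \mathcal{B}_1$ is the normal-form $k$-basis of $\tor^S(M,k)$ with respect to the chosen term order.

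The heart of the proof is (1), which reduces to showing $T' = \operatorname{span}_k \mathcal{B}_0$ (whereupon the description of $T''$ follows from Corollary~\ref{directs}). The containment $\operatorname{span}_k \mathcal{B}_0 \subseteq T'$ is tautological, and for the reverse I would argue that every product $e_J \cdot a$ with $a \in \overline{A_0}$ reduces via the Gr\"obner basis to an element of $\operatorname{span}_k \mathcal{B}_0$. The crucial input is degree counting: when Theorem~\ref{gb theorem} rewrites $e_r \cdot a$ (for $a \in \overline{B_0(p)}$ and $r \geq p$) as $e_r a = b$ with $b \in (e_1,\ldots,e_{r-1}) \cdot \overline{A_0 \oplus A_1}$, homogeneity forces $b$ to have degree $1$, so $b = \sum_{i<r} e_i c_i$ with each $c_i$ of degree $0$, hence $c_i \in \overline{A_0(r)} \subseteq \overline{A_0}$. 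Iterating, the reduction stays within $E \cdot \overline{A_0}$. The $E(p-1)$-action by left multiplication on $E(p-1) \otimes_k \overline{B_0(p)}$ is inherited because this tensor product injects into $\tor^S(M,k)$ via distinct normal-form monomials, which are linearly independent.

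Assertions (2) and (4) are then immediate. By (1), $T'$ is generated by $\overline{A_0}$ in degree $0$ and $T''$ by the image of $\overline{A_1}$ in degree $1$. The relations from Theorem~\ref{gb theorem} with $a \in \overline{A_0}$ (resp.\ $\overline{A_1}$) are internal to $T'$ (resp.\ $T''$) by the above sharpening; their leading terms complement the normal-form basis of $T'$ (resp.\ $T''$), so they form a Gr\"obner basis for the module of relations, giving (4). Iterating the same degree argument on successive syzygies shows both resolutions are linear, giving (2). For (3), the horseshoe lemma applied to $0 \to T' \to \tor^S(M,k) \to T'' \to 0$ presents a free resolution of $\tor^S(M,k)$ as the mapping cone of a chain map $\Phi: \F(T'')[-1] \to \F(T')$ between the minimal $E$-free resolutions. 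At cone-position $n$, $\Phi$ contributes a map $\F_n(T'') \to \F_{n-1}(T')$, whose source has generators in internal degree $n+1$ (linear resolution of $T''$, generated in degree $1$) and whose target has generators in internal degree $n-1$, so its matrix entries lie in $E_2 \subseteq (e_1,\ldots,e_c)$; thus the cone is already minimal.

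The main obstacle is the degree-counting sharpening that makes (1) work, namely showing that Gr\"obner reductions preserve the $\overline{A_0}/\overline{A_1}$ partition. This depends on the homogeneity built into Theorem~\ref{gb theorem} together with the minimality of $h_r$ from Proposition~\ref{revan}, which together ensure the rewrite $b$ has no pure degree-$1$ component in $\overline{A_1}$ when $a \in \overline{A_0}$. Once this is clean, the remainder is bookkeeping on the Gr\"obner data.
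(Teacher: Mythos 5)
Your proposal is correct and follows essentially the same route as the paper: the degree argument showing the Gr\"obner relations on $\overline{A_0}$ rewrite entirely within $E\cdot\overline{A_0}$, the dimension count against the initial module $\bigoplus_p E/(e_p,\dots,e_c)\otimes_k\overline{B_0(p)}$ for (1) and (4), and the degree-$2$ connecting map for minimality of the mapping cone in (3). The only place you are looser than the paper is the linearity claim in (2), where "iterating the degree argument on successive syzygies" should be replaced by the observation that the initial modules $\widehat{T'}$ and $\widehat{T''}$ have linear resolutions (Lemma~\ref{Tate res}) and graded Betti numbers can only drop under Gr\"obner deformation.
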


We will make use of the following well-known lemma:

\blemma\l{Tate res} The minimal $E$-free resolution of $E(p) = E/(e_{p+1},\dots,e_{c})$
 has underlying free module
$$
E\otimes_{k}\hom_{\rm gr}(k[x_{p+1}, \dots ,x_{c}], k)\,,
$$
where $k[x_{p+1}, \dots , x_{c}]$ denotes the polynomial ring
on $c-p$ variables of homological and internal degree $1$ generating a
vector space that is dual to $\hbox{\rm span}\langle e_{p+1},\dots, e_{c}\rangle$.
\elemma

\mproof The case $p=0$ is the resolution of the residue field $k$. This resolution is the ``generalized Koszul complex" of Priddy and others---see for example~\cite[Exercise~17.22]{EiBook}.

The minimal resolution of 
$E(p)$ as an $E$-module is easily seen to be the tensor product, over $k$, of $E(p)$ with the 
minimal resolution of $k$ as a module over 
the exterior algebra $k\langle e_{p+1}, \dots, e_{c}\rangle$.
\mqed

 \mproofof{Theorem}{\ref{twores}}
By Theorem \ref{decomp}, 
$$
U:= \bigoplus_{p=1}^{c} E(p-1)\otimes_k \overline {B_{0}(p)} \subseteq T'.
$$ 
The homogeneous relations~(\ref{gb}) with $a\in \overline{A_{0}}$ must have
$b\in \overline{A_{0}}$ as well, so they are relations on the free $E$-module
$E\otimes_{k}\overline A_{0}$.
If we factor out their leading terms $e_{r}a$ we obtain the $E$-module
$$
\widehat{ T'} := \bigoplus_{p=1}^c\, E/(e_{p}, \dots, e_{c})\otimes_k\overline {B_0(p)}\,.
$$
It  has the same dimension as the vector space 
$U$, proving
both  that $U =T'$ and that we have a Gr\"obner basis for $T'$.
 
As for $T''$, if we factor out the leading terms of the  relations~(\ref{gb})  with $a\in\overline{A_1}$ from 
the free $E$-module $E\otimes_{k}\overline{A_{1}}$, we obtain the $E$-module
$$
\widehat{ T''} := \bigoplus_{p=1}^c\, E/(e_{p}, \dots, e_{c})\otimes_k\overline {B_1(p)}\,,
$$
which has the same vector space dimension as $T''$, proving that these
relations form a Gr\"obner basis for the relations on $T''$ as claimed. This
concludes the proofs of parts (i) and (iv) of the Theorem.

To prove part (ii), observe first that, by Lemma~\ref{Tate res}, the minimal free resolutions  of the $E$-modules $E/(e_{p}, \dots, e_{c})$ 
are linear. It follows that the minimal $E$-free resolutions of $\widehat{ T'}$
and $\widehat{ T''}$ are linear, and thus the miminal free resolutions $\F'$ of $T'$ and $\F''$ of $T''$ are also linear.

It remains to prove part  (iii).
From the short exact sequence
$$
0\to T'\to \tor^{S}(M,k) \to T'' \to 0
$$
we see that $\tor^{S}(M,k)$ has a free resolution that is the 
mapping cone of some map of complexes $\alpha: \F''[-1]\to \F'$. 
The $j$-th term $F_{j}$ of $\F'$ is generated in degree $j$, while the $j$-th term
$F''_{j}$ of
$\F''$ is generated in degree $j+1$
since the generators $\overline {A_{1}}$ of $T''$ have degree 1.
Hence,  the matrices in the map $\alpha$
have entries of degree 2. In particular the mapping cone is a minimal resolution
of the form
$$ 
\cdots\to F'_{j}\oplus F''_{j} \to\cdots \to F'_{0}\oplus F''_{0} =  E\otimes_{k}(\overline{A_{0} \oplus
A_{1}})\,.
$$
\vglue - .5cm\ 
\mqed

In  Section~\ref{Identifying the Ext Module} we will identify the resolutions $\F'$ and $\F''$
and the map $\alpha$ in terms of the minimal free resolution of $M$ as an $R$-module. We already have enough information to interpret the Betti numbers:

\begin{thm}\l{resTor}
Suppose that $M$ is the module of a minimal higher matrix factorization for  a regular sequence $\ff c$ and for $s=1,2$ and $p = 1,\dots,c$, let $b_s(p) = \rank B_{s}(p)$. With notation as  above, the graded Betti numbers of $\tor^{S}(M,k)$ as
an $E$-module are:
\b{align*}
\beta_{i,i}^E\Big( \tor^{S}(M,k) \Big)&
=\sum_{p=1}^{c}{c-p+i\choose c-p}b_{0}(p) 
= \dim_{k}\, \ext^{2i}_{R}(M,k)\\
\beta_{i,i+1}^E\Big( \tor^{S}(M,k) \Big)& 
=\sum_{p=1}^{c}{c-p+i\choose c-p}b_{1}(p) 
= \dim_{k}\, \ext^{2i+1}_{R}(M,k)
\, ,
\end{align*}
and these two formulas give the graded Betti numbers of $T'$ and $T''$ individually.
\end{thm}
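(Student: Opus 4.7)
The plan is to compute the graded Betti numbers of $T'$ and $T''$ separately using Theorem~\ref{twores}, assemble them via the mapping cone to obtain $\beta^E(\tor^S(M,k))$, and finally match the resulting formulas with $\dim_k \ext^{\bullet}_R(M,k) = \beta^R_{\bullet}(M)$ read off from the $R$-free layered resolution. First I handle $T'$, the argument for $T''$ being parallel. Theorem~\ref{twores}(iv) exhibits a Gr\"obner degeneration of $T'$ to $\widehat{T'} = \bigoplus_{p=1}^c E/(e_p,\dots,e_c) \otimes_k \overline{B_0(p)}$. By Lemma~\ref{Tate res}, the minimal $E$-free resolution of $E/(e_p,\dots,e_c)$ is linear with $i$-th Betti number $\binom{c-p+i}{c-p}$ (the number of degree-$i$ monomials in $c-p+1$ variables), so summing over $p$ gives $\beta^E_{i,i}(\widehat{T'}) = \sum_p \binom{c-p+i}{c-p}\, b_0(p)$. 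Since $T'$ also has a linear resolution by Theorem~\ref{twores}(ii), since Gr\"obner degeneration preserves the Hilbert series, and since for any $E$-module generated in a single degree $d$ with a linear resolution the identity
$$H_M(t) = (1+t)^c \sum_{i\geq 0} (-1)^i \beta^E_{i,i+d}(M)\, t^{i+d}$$
determines the Betti numbers from the Hilbert series, we conclude $\beta^E_{i,i}(T') = \beta^E_{i,i}(\widehat{T'})$. The same argument with $B_1$ in place of $B_0$ and generating degree $1$ in place of $0$ gives $\beta^E_{i,i+1}(T'') = \sum_p \binom{c-p+i}{c-p}\, b_1(p)$.

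For the assembly, Theorem~\ref{twores}(iii) shows that the minimal $E$-free resolution of $\tor^S(M,k)$ has $i$-th free module $F'_i \oplus F''_i$, where $F'_i$ is generated in internal degree $i$ and $F''_i$ in degree $i+1$. Because these two internal degrees do not overlap, the graded Betti numbers of $\tor^S(M,k)$ split cleanly as $\beta^E_{i,i}(\tor^S(M,k)) = \beta^E_{i,i}(T')$ and $\beta^E_{i,i+1}(\tor^S(M,k)) = \beta^E_{i,i+1}(T'')$, yielding the two combinatorial formulas in the theorem statement, as well as the assertion that these numbers also give the Betti numbers of $T'$ and $T''$ individually.

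Finally, to identify $\sum_p \binom{c-p+i}{c-p}\, b_s(p)$ with $\dim_k \ext^{2i+s}_R(M,k) = \beta^R_{2i+s}(M)$ for $s=0,1$, observe that the $R$-free layered resolution $\T$ reviewed in Section~\ref{layered res} is a minimal $R$-free resolution of $M$ since $M$ is the module of a minimal higher matrix factorization. The inductive construction $\T(p+1) = \Sh(\T'(p))$ applies the Shamash construction to the box complex in which $\B(p+1)$ contributes ranks $b_0(p+1)$ and $b_1(p+1)$ in homological degrees $0$ and $1$. Since the Shamash construction multiplies Poincar\'e series by $1/(1-t^2)$, tracking ranks gives the recursion
$$P_{\T(p+1)}(t) = \frac{P_{\T(p)}(t) + b_0(p+1) + b_1(p+1)\, t}{1-t^2},$$
which telescopes from $\T(0) = 0$ to
$$P^R_M(t) = \sum_{p=1}^c \frac{b_0(p) + b_1(p)\, t}{(1-t^2)^{c-p+1}}.$$
Expanding $(1-t^2)^{-(c-p+1)} = \sum_{i\geq 0} \binom{c-p+i}{c-p} t^{2i}$ and extracting coefficients of $t^{2i}$ and $t^{2i+1}$ yields the desired equalities. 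The main obstacle is this final step: although the combinatorial identity is routine once the recursion is in place, verifying the recursion requires careful bookkeeping of the ranks contributed at each Shamash extension of the box complex and appealing to the minimality statements of \cite[Theorems~1.3.1, 5.1.2]{EP}. The first two paragraphs, by contrast, are essentially formal consequences of the Gr\"obner basis of Theorem~\ref{twores}(iv) and the linearity of the resolutions.
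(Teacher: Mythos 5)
Your proposal is correct and follows essentially the same route as the paper: the Betti numbers of $T'$ and $T''$ are read off from their Gr\"obner degenerations $\widehat{T'}$, $\widehat{T''}$ via Lemma~\ref{Tate res} and the linearity of the resolutions, the mapping cone of Theorem~\ref{twores}(iii) assembles them into the Betti numbers of $\tor^S(M,k)$, and the identification with $\dim_k\ext^{2i+s}_R(M,k)$ comes from the minimality of the layered $R$-free resolution. The only difference is presentational: where the paper cites \cite[Corollary~1.3.3]{EP} for the Poincar\'e series of $M$ over $R$, you re-derive that formula from the Shamash recursion, and you justify the equality $\beta(T')=\beta(\widehat{T'})$ explicitly via the Hilbert series identity rather than leaving it implicit.
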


\mproof
The minimal graded $E$-free resolutions of $T'$ and $T''$  are linear,
and so their Betti numbers are equal to the Betti numbers of the modules
$\widehat T' $ and $\widehat T''$ used in the proof of Theorem \ref{twores}.
These Betti numbers can be obtained from Lemma~\ref{Tate res}.
Furthermore, the minimality of the layered resolution $\ldown_R(M,\f)$  implies the identical formula for
$\dim \,\ext_{R}^{2i}(M,k)=\beta_{2i}^R(M)$ and $\dim \,\ext_{R}^{2i+1}(M,k)=\beta_{2i+1}^R(M)$
(see \cite[Corollary~1.3.3]{EP}).
 \mqed

In experiments, we have observed that the sequence of $E$-modules
$$
0\to T' \to \tor^{S}(M,k) \to T'' \to 0\,,
$$
often splits. Here is the simplest example we know where this is not the case:

\b{example} 
Let $
 S = k[a,b,c], $ 
 $R= S/(a^{4}, b^{4}, c^{4}) $,  $$
 N = R\otimes_{S}
 \begin{coker}
\begin{pmatrix}
 a&b&c\\
 b&c&a
\end{pmatrix}
\end{coker}\, ,
$$
and let $M$ be a sufficiently high syzygy of the $R$-module $N$. Computation using Macaulay2 shows that the dual $E$-module, which up to a shift in grading is $\ext^{S}(M,k)$,
has smaller Betti numbers than does the direct sum of the duals of
$T'$ and $T''$. In particular, the $E$-submodule $T'\subset \tor_S(M,k)$ is not a direct summand.
\end{example}

\section{Regularity } \label{Sreg}

If $V$ is a finite dimensional $\mathbb Z$-graded  vector space,  we set $\max V=\max \{j\mid V_j\not=0\}$.
We define the {\it regularity} of a graded $E$-module $L$ to be
$$
\reg_{E}\,L := 
\sup_{i}\, \Big\{{\max}\,\tor^{E}_{i}(L,k)-i\,\Big\}.
$$ 
The minimal $E$-free resolution $\bf U$ of $k$ is linear, so $\tor^E(L,k)\cong {\rm H}(L\otimes {\bf U})$
 gives $\max \tor^E_i(L,k) \leq i+\max L$. Thus
$$
\reg_E(L) \leq \max L.
$$

From Theorem~\ref{twores} we get:

\bcor\l{torreg}
If  $M$ is the module of a minimal higher matrix factorization for a regular sequence $\ff c$,
then 
$$
\reg_{E}\, \tor^{S}(M,k)=1\,.$$ 
\ecor

We will provide a short alternate proof of this result. To do this, we compare
 the regularity of an
$E$-module $L$ with the regularity of $L$ regarded as a module over $E(p):=k\langle  e_1,\dots , e_{p}\rangle$, regarded as a subalgebra of $E$.

\begin{thm}\l{reg over sub} If $L$ is a finitely generated graded $E$-module, then
$$
\reg_{E}\, L \leq \reg_{E(p)}\, L \leq \reg_{E}\, L+c-p.
$$
\end{thm}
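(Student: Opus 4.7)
The plan is to prove the two inequalities separately, both leveraging the tensor decomposition $E \cong E(p) \otimes_k E'$, where I set $E' := k\langle e_{p+1}, \ldots, e_c\rangle$, which is simultaneously a subalgebra of $E$ and isomorphic to $E/(e_1,\ldots,e_p)$.

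For the easier second inequality $\reg_{E(p)} L \leq \reg_E L + (c-p)$, I would start with the minimal $E$-free resolution $\F$ of $L$ and view it as a (generally non-minimal) $E(p)$-free resolution, valid because $E$ is a free $E(p)$-module. Writing $F_i = E \otimes_k V_i$ with $V_i = \tor^E_i(L, k)$, the definition of regularity gives $\max V_i \leq i + \reg_E L$. Since $E \cong E(p) \otimes_k E'$ as $E(p)$-modules and $E'$ has top graded degree $c - p$, the $E(p)$-free module $F_i$ has generators in degrees at most $(c - p) + \max V_i$. Then $\tor^{E(p)}_i(L, k) = H_i(\F \otimes_{E(p)} k)$ is a subquotient of $F_i \otimes_{E(p)} k \cong E' \otimes_k V_i$ and so has top degree at most $(c-p) + i + \reg_E L$, whence the claim follows upon subtracting $i$ and taking the supremum over $i$.

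For the first inequality $\reg_E L \leq \reg_{E(p)} L$, my plan is to invoke the Cartan--Eilenberg change of rings spectral sequence associated to the quotient map $E \twoheadrightarrow E'$:
$$\tor^{E'}_a\!\bigl(\tor^E_b(L, E'),\, k\bigr) \ \Longrightarrow\ \tor^E_{a+b}(L, k).$$
The central calculation is the identification $\tor^E_b(L, E') \cong \tor^{E(p)}_b(L, k)$ as graded vector spaces. Since $E' \cong E \otimes_{E(p)} k$ as $E$-modules, tensoring the minimal $E(p)$-free resolution $\P$ of $k$ up to $E$ produces a free resolution $E \otimes_{E(p)} \P$ of $E'$ over $E$ that is minimal (its differentials have entries in $\gm_{E(p)} \subset \gm_E$). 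Consequently $\tor^E_b(L, E') = H_b(L \otimes_{E(p)} \P) = \tor^{E(p)}_b(L, k)$. To finish, I would apply the general bound $\max \tor^{E'}_a(N, k) \leq a + \max N$---which holds for any graded $E'$-module $N$ because the minimal $E'$-free resolution of $k$ is linear---to $N = \tor^{E(p)}_b(L, k)$, using $\max N \leq b + \reg_{E(p)} L$. This yields $\max \tor^E_n(L, k) \leq n + \reg_{E(p)} L$ for every $n$, giving the inequality.

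The hard part I anticipate is verifying the Tor identification and the minimality of the tensor resolution $E \otimes_{E(p)} \P$; once these are in hand, reading the numerical bounds off the spectral sequence is routine bookkeeping.
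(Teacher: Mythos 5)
Your proof of the right-hand inequality $\reg_{E(p)}L\le \reg_E L+c-p$ is essentially the paper's: restrict the minimal $E$-free resolution of $L$ to $E(p)$, using that $E\cong E(p)\otimes_k E'$ is a free $E(p)$-module with generators in degrees at most $c-p$. For the left-hand inequality, however, you take a genuinely different route. The paper constructs an explicit (possibly non-minimal) $E$-free resolution of $L$, namely the tensor product over $E(p)$ of the minimal $E(p)$-free resolution of $L$ with the minimal $E$-free resolution of $E(p)$; since the latter is linear by Lemma~\ref{Tate res}, the bound $\reg_E L\le\reg_{E(p)}L$ is read off directly from the generating degrees of the terms of that resolution, with no homological machinery beyond the split-exactness of the second factor over $E(p)$. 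You instead pass through the complementary quotient $E'\cong E/(e_1,\dots,e_p)$ and invoke the change-of-rings (hyper-Tor) spectral sequence $\tor^{E'}_a\bigl(\tor^E_b(L,E'),k\bigr)\Rightarrow\tor^E_{a+b}(L,k)$, together with the identification $\tor^E_b(L,E')\cong\tor^{E(p)}_b(L,k)$ obtained by base-changing the minimal $E(p)$-free resolution of $k$ along the free extension $E(p)\subset E$. Both your Tor identification and the minimality of that base-changed resolution are correct, and the graded-vector-space identification is all your numerical bound needs, since $\max\tor^{E'}_a(N,k)\le a+\max N$ holds for any graded $E'$-module $N$. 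In the end both arguments rest on the linearity of a Priddy-type resolution (of $E(p)$ over $E$ in the paper, of $k$ over $E(p)$, equivalently of $E'$ over $E$, in yours); the paper's version is the more elementary, exhibiting outright the resolution whose existence your $E_2$-page estimate only implies, while yours avoids having to check acyclicity of the tensor-product total complex.
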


\mproof
First, we will prove the left inequality.
Take $\F$ to be the tensor product over $E(p)$ of a minimal free resolution $\G$ of $L$ as an $E(p)$-module with the minimal free resolution ${\bf D}$ of $E(p)$ as an $E$-module.  Since the latter
 is split exact as a sequence of $E(p)$-modules, $\F$ is
 a (possibly non-minimal) $E$-free resolution of $L$ as an $E$-module. 
By Lemma~\ref{Tate res},  the  resolution  ${\bf D}$
 is linear. 
Therefore, for each $i$, 
$${\max}\,F_{i}\otimes k\le {\max}_{q\le i}\{\max \, G_{q}\otimes k\}+(i-q)
\le \reg_{E(p)} L+i\,.$$

For the second inequality, note that $E = E(p)\otimes_{k} k\langle e_{p+1}, \dots, e_{c}\rangle$ is a free $E(p)$-module with
generators in degrees $\leq c-p$, so a minimal $E$-free resolution is a (possibly non-minimal)
$E(p)$-free resolution with regularity $\reg_{E}\, L+c-p$.
\mqed

We now return to the situation of Notation~\ref{basic notation}.

\begin{thm}\l{reg1}
Suppose that $M$ is the module of a minimal higher matrix factorization for a regular sequence $\ff c$ with $c\geq 1$.
The regularity of $\tor^{S}(M,k)$ as an $E$-module is $1$.
\end{thm}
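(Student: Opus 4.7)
The plan is to proceed by induction on $c$, using Theorem~\ref{reg over sub} as the key tool that lets me trade an $E$-regularity bound for an $E(p)$-regularity bound, combined with the $E(c-1)$-decomposition of Theorem~\ref{decomposition}. Throughout, I will use the general bound $\reg_E L\le\max L$ mentioned at the start of this section.

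For the base case $c=1$, the ring $S$ is regular and $f_1$ annihilates $M$, so the $S$-free resolution of $M$ has length at most $1$ and $\tor^S(M,k)$ is concentrated in homological degrees $0$ and $1$. Hence $\max \tor^S(M,k)=1$, which gives $\reg_E \tor^S(M,k)\le 1$; the nonvanishing of $\tor^S_1(M,k)$ (forced by $B_1(1)\neq 0$ in the higher matrix factorization) puts a minimal generator in degree $1$, giving the matching lower bound.

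For the inductive step, assume the theorem for $c-1$. Applying Theorem~\ref{decomposition} at $p=c-1$ yields an isomorphism of $E(c-1)$-modules
\[
\tor^S(M,k)\ \cong\ \tor^S(M(c-1),k)\ \oplus\ \bigl(E(c-1)\otimes_k \overline{B_0(c)}\bigr)\ \oplus\ \bigl(E(c-1)\otimes_k \overline{B_1(c)}\bigr).
\]
The two free summands on the right have $E(c-1)$-regularity $0$ and $1$ respectively. For the first summand I invoke the inductive hypothesis: $M(c-1)$ is itself the module of a minimal higher matrix factorization with respect to $f_1,\dots,f_{c-1}$ (this is exactly what the iterated layered/Koszul-extension construction of Section~\ref{layered res} provides, via \cite[Corollary~7.3.4]{EP}), and so $\reg_{E(c-1)} \tor^S(M(c-1),k)=1$. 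Taking the maximum over the three summands gives $\reg_{E(c-1)} \tor^S(M,k)\le 1$, and then the left inequality of Theorem~\ref{reg over sub} (with $p=c-1$) yields $\reg_E \tor^S(M,k)\le \reg_{E(c-1)} \tor^S(M,k)\le 1$. The lower bound $\reg_E\ge 1$ again comes from the presence of a minimal generator of $\tor^S(M,k)$ in degree $1$, which is guaranteed by Corollary~\ref{decomp} together with the non-triviality of some $B_1(p)$ in a minimal higher matrix factorization.

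The only non-routine step is verifying that $M(c-1)$ legitimately satisfies the inductive hypothesis, i.e.\ that the layered construction is functorial in $p$ so that $M(c-1)$ is itself the module of a minimal higher matrix factorization for the shorter regular sequence $f_1,\dots,f_{c-1}$; this is the main potential obstacle, but it is essentially packaged into the results quoted in Section~\ref{layered res}. Everything else is a direct regularity estimate on free $E(c-1)$-modules plus one application of Theorem~\ref{reg over sub}.
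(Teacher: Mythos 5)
Your proof is correct and follows essentially the same route as the paper's: an induction along the layers using the decomposition of Theorem~\ref{decomposition}, the left inequality of Theorem~\ref{reg over sub} to pass from $E(c-1)$-regularity to $E$-regularity, and the nonvanishing of $B_1(c)$ (via Proposition~\ref{revan}/Theorem~\ref{twores}) for the lower bound. The only cosmetic difference is that you induct on the codimension $c$ while the paper inducts on the layer index $p$ within a fixed $M$; the fact you flag as the "only non-routine step" is indeed supplied by the layered-resolution results quoted in Section~\ref{layered res}.
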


\mproof 
By \cite[Theorem~3.1.4]{EP} the projective dimension of the $S$-module $M$ is $c$.
The description of its minimal resoution $\L := \lup^{S}(M,\f)$ given in Section~\ref{Sreview}
shows that the $c$-th free module in $\L$ is $E(c-1) \otimes B_{1}(c)$. Hence, $B_{1}(c)\neq 0$.

By Proposition~\ref{revan} it follows that
 the $E$-module $\tor^{S}(M,k)$ requires generators of degree 1 from $B_{1}(c)$. Thus its regularity cannot be $<1$, and we need only
prove that it is $\leq 1$. 
We will prove this by induction on $p$.

If $p=1$ then $M(1)$ is a maximal Cohen-Macaulay module over the hypersurface $S/(f_{1})$  and the resolution ${\bf L}(1)$ has projective dimension 1.  Thus,  we have $\reg_{E(1)}\,\tor^{S} (M(1),k)= 1\, $.
 
By induction on $p$, 
the direct sum in Corollary~\ref{directs} shows that 
$$\reg_{E(p-1)}\,\tor^{S} (M(p),k)\le 1\, .$$
Applying the left inequality in Theorem~\ref{reg over sub}, we conclude
 $$\reg_{E(p)}\,\tor^{S} (M(p),k)\le 1\, .$$
\vglue -.7cm \  \mqed

\section{A Gr\"obner basis for the relations on $\ext_R(M,k)$}\label{Sect: GB for ext}

As in the Introduction, we write
$ \CR:= k[\chi_{1},\dots,\chi_{c}]\,$
for the ring of CI-operators acting on   $\ext_{R}(M,k)$. Note that the $\chi_{i}$ have  degree $2$.  

Throughout this section we will suppose that $M$ is the module of a minimal higher matrix factorization for the regular sequence $f_1,\dots ,f_c$.
We will provide results for $\ext_{R}(M,k)$ as an $\CR$-module 
that are analogous to results proved above for $\tor^{S}(M,k)$ as an $E$-module.

We use the notation and hypotheses of~\ref{basic notation}.
Furthermore, we write
$-^{\vee}$ for $\hom(-,k)$.
Since the differential
$B_1(p) \to B_0(p)$ is minimal, we  may think of
${\B(p)}^{\vee}$ as the direct sum ${B_{1}(p)^{\vee} \oplus B_{0}(p)^{\vee}}$. 
We set
$$
{\cal R}(p):=k[ \chi_{p}, \dots, \chi_{c}] \subset \CR\,.
$$

The following result is the analogue of Corollary~\ref{directs}.

\bcor\l{extdecomp}
{\rm (\cite[Corollary~5.1.6]{EP})}
There is an isomorphism
 $$
\ext_{R}(M,k) \cong \bigoplus_{p=1}^{c} k[ \chi_{p}, \dots, \chi_{c}] \otimes_{k}  {\bf B}(p)^{\vee}=\bigoplus_{p=1}^{c} {\cal R}(p)\otimes_{k}  {\bf B}(p)^{\vee}
$$
of graded vector spaces. The subspace  
$$
\CR(p) \otimes  {\bf B}(p)^{\vee}
$$
is an $\CR(p)$-submodule and $\CR(p)$ acts on it via the action on the first factor.
\ecor

\vglue .25cm
The result above can be used to prove an analogue to Theorem~\ref{filtration of tor}:

\begin{thm}\label{extgb}
Suppose that $M$ is the module of a  minimal higher matrix factorization for a regular sequence $\ff c$.  Let  $1\le p\le c$. For every 
$$
a\in  {\bf B}(p)^\vee
$$
and every $r<p$ there is  a homogeneous relation on $\ext_R(M,k)$ of the form
\bequation\l{extgbeq}
\chi_{r}a  - b
\end{equation}
with $b\in (\chi_{r+1},\dots ,\chi_c)\Big(A_{0}\oplus A_{1}\Big)^{\vee}$. 
These relations  form  a Gr\"obner basis 
 for the relations on $\ext_R(M,k)$ as an $\cal R$-module
with respect to any term order that
refines the lexicographic order on the monomials of $\cal R$ with $\chi_1\succ\dots \succ \chi_c$.
The module defined by the leading terms of these relations is
$$
\bigoplus_{p=1}^{c}{\cal R}/(\chi_{1},\dots, \chi_{p-1})\, {\bf B}(p)^{\vee}\,.
$$

\end{thm}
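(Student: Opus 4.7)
The proof strategy parallels that of Theorem~\ref{gb theorem} (the Tor analog), with the substitutions $E \leftrightarrow \mathcal{R}$, $\mathbf{L} \leftrightarrow \mathbf{T}$, Corollary~\ref{decomp} replaced by Corollary~\ref{extdecomp}, and Proposition~\ref{revan} replaced by Proposition~\ref{extprop}. Note the inversion of direction: for Tor we had $r \geq p$ with $b \in (e_1, \dots, e_{r-1})$, whereas here $r < p$ with $b \in (\chi_{r+1}, \dots, \chi_c)$, reflecting the reversed monomial order.

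First, I would establish the relation $\chi_r a - b$. For $a \in \mathbf{B}(p)^\vee$ and $r < p$, Proposition~\ref{extprop} (applied with $j = r \leq p$) provides a cochain-level CI operator $\chi_r$ whose components into $\hom_R(B_0(p), R)$ and $\hom_R(B_1(p), R)$ vanish. Combining this with the decomposition $\ext_R(M, k) = \bigoplus_q \mathcal{R}(q) \otimes \mathbf{B}(q)^\vee$ of Corollary~\ref{extdecomp}, applied iteratively along the layered filtration $\mathbf{T}(1) \subset \cdots \subset \mathbf{T}$, one shows that $\chi_r \cdot a$ has no component in any summand indexed by $q \leq r$. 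For $q > r$, every monomial in the $q$-th summand involves only the variables $\chi_q, \dots, \chi_c \subseteq \{\chi_{r+1}, \dots, \chi_c\}$, and the cohomological degree shift of $+2$ rules out a constant term in the generators $(A_0 \oplus A_1)^\vee$. Hence $\chi_r a = b$ with $b \in (\chi_{r+1}, \dots, \chi_c)(A_0 \oplus A_1)^\vee$. The leading term of this relation is $\chi_r a$, because every monomial in $b$ begins with some $\chi_j$ with $j > r$. Factoring $\mathcal{R} \otimes (A_0 \oplus A_1)^\vee = \bigoplus_p \mathcal{R} \otimes \mathbf{B}(p)^\vee$ by the submodule generated by these leading terms yields $\bigoplus_{p=1}^c \mathcal{R}/(\chi_1, \dots, \chi_{p-1}) \otimes \mathbf{B}(p)^\vee = \bigoplus_p \mathcal{R}(p) \otimes \mathbf{B}(p)^\vee$, which has the same graded vector space dimensions as $\ext_R(M, k)$ by Corollary~\ref{extdecomp}. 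Since $(A_0 \oplus A_1)^\vee$ generates $\ext_R(M, k)$ over $\mathcal{R}$, the natural surjection $\mathcal{R} \otimes (A_0 \oplus A_1)^\vee \twoheadrightarrow \ext_R(M, k)$ factors through the quotient by leading terms; the equality of graded dimensions then forces both surjections to be isomorphisms, so the given relations generate the full submodule of relations and constitute a Gr\"obner basis.

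The main obstacle lies in the first step: converting the cochain-level vanishing of Proposition~\ref{extprop} into a cohomological statement that $\chi_r \cdot a$ has no component in the ``lower'' summands $\mathcal{R}(q) \otimes \mathbf{B}(q)^\vee$ with $q \leq r$. This hinges on a careful identification of each $\mathbf{B}(q)^\vee$ as cocycles inside $\hom_R(\mathbf{T}, k)$ compatibly with the Shamash-type construction of the layered $R$-free resolution, so that the vanishing of the relevant components of $\chi_r$ at the cochain level descends to cohomology. Once this identification is in hand, the remainder is a formal dimension count fully parallel to the proof of Theorem~\ref{gb theorem}.
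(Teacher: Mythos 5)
Your proposal follows the paper's proof exactly: the existence of the relations $\chi_r a - b$ comes from Proposition~\ref{extprop}, the leading terms are the $\chi_r a$, and the Gr\"obner basis claim is settled by comparing the Hilbert function of the quotient by leading terms, $\bigoplus_p \CR(p)\otimes {\bf B}(p)^{\vee}$, with that of $\ext_R(M,k)$ via Corollary~\ref{extdecomp}. Your extra elaboration of how the cochain-level vanishing in Proposition~\ref{extprop} forces $b$ into $(\chi_{r+1},\dots,\chi_c)\bigl(A_0\oplus A_1\bigr)^{\vee}$ is a correct filling-in of a step the paper compresses into one sentence, not a different argument.
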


\mproof
The existence of the desired relations follows from Proposition~\ref{extprop}.
The leading term of the relation $
\chi_{r}a  - b
$
  in the monomial order $\succ$,
is  $\chi_ra$.
 If we factor out these leading terms from 
the free $\cal R$-module generated by $(A_{0}\oplus A_{1})^{\vee}$ we obtain
the module 
$$\bigoplus_{1\le p\le c}\,  {\cal R}(p) \otimes_k\,{\bf B}(p)^{\vee}\,.$$
By Corollary~\ref{extdecomp}, this  has the same Hilbert function
as the $ {\cal R}$-module $\ext_{R}(M,k)$.
Therefore, the given relations form a Gr\"obner basis, and in particular they generate the module of all relations.
\mqed

Finally, we provide an analogue to Corollary~\ref{torreg}.

\bcor\l{extreg}
Suppose that $M$ is the module of a  minimal higher matrix factorization for a regular sequence $\ff c$.  
The $\cal R$-module $\ext_{R}^{even}(M,k)$ has regularity $0$, and the $\cal R$-module $\ext_{R}^{odd}(M,k)$ has regularity $1$.
\qed
\ecor

\section{Higher CI-operators and an inverse Eisenbud-Shamash construction}\label{inverseShamash}

The Eisenbud-Shamash Construction (see \cite{Shamash} for the codimension 1 case and \cite[Section 7]{Ei} for the general case) allows one to construct a (generally nonminimal) $R$-free resolution of an $R$-module from an $S$-free resolution together with a system of higher homotopies on the $S$-free resolution. In this section we will explain a construction that goes the other way: from an $R$-free resolution of an $R$-module together with a \emph{system of higher CI-operators} $\{t_{i}\}$ as defined below, we will construct a (generally nonminimal) $S$-free resolution. 

The classic CI-operators were first defined on $\ext_{R}(M,k)$ by Gulliksen \cite{G}, and then in the form used here by Eisenbud~\cite{Ei}. The material in this section was discovered independently by Jesse Burke, and a more general version will appear in his paper~\cite{Burke}.

\def\LL{{K}}

\begin{prop}\label{t maps} Let $S$ be a commutative ring,  let $\ff c$ be a regular sequence, and let $R = S/(\ff c).$
Let 
$$
\K := \K(\ff c): \quad \cdots\map{t_{0}'^{3}}\  \wedge^{2}S^{c}\map{t_{0}'^{2}} S^{c}\map{t_{0}'^{1}} S
$$ 
be the Koszul complex resolving $R$. Let $\overline \G$ be a complex of free
$R$-modules, and suppose that   
$$
{\bf G}:\quad \cdots \map{t_{1}'} G_{p}\map{t_{1}'} G_{p-1}\map{t_{1}'}\cdots\map{t_{1}'} G_{0}
$$
is a lifting of $\overline \G$ to a sequence of maps of free $S$-modules. There exist operators 
 $$
 t_{i} = \sum_{p,q} t^{p,q}_{i}: {\bf G}\otimes {\bf \LL}\to ({\bf G}\otimes \K)[-1]
 $$ 
 that commute with the natural action of $\wedge S^{c}$ on $\K$, having components
 $$
 t^{p,q}_{i}:G_{p}\otimes \LL_{q}\to G_{p-i}\otimes \LL_{q+i-1}
 $$ 
 for
 $i,q\geq 0, p\geq i$
 and satisfying the conditions
\balign
 t_{0}^{p,q} &= 1\otimes (-1)^{p}{t'}_{0}^{q}\\
  \quad  t_{1} &= t'_{1}\otimes 1,
\end{align*}
and
 $$
 \sum_{i+j=n}t_{i}t_{j} = 0
 $$
 for all $n$.
 The maps $R\otimes t_{i}$ are determined uniquely by these conditions.
\end{prop}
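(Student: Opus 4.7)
The approach is induction on $n$, using the equations $\sum_{i+j=n} t_it_j = 0$ as inductive hypotheses while insisting that every $t_i$ commute with the $\wedge S^c$-action. The base cases $n=0,1$ are immediate sign computations: the factor $(-1)^p$ in $t_0^{p,q}$ is inserted precisely so that $t_0^2 = 0$ (using $(t_0')^2 = 0$) and so that $t_0t_1 + t_1t_0$ vanishes on $G_p\otimes K_q$, where the two compositions produce $\pm(-1)^p\,t_1'(g)\otimes t_0'\omega$ with opposite signs.

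For the inductive step, suppose $t_0,\dots,t_{n-1}$ have been built satisfying the relations in degrees $<n$, and set
$$
\phi_n := \sum_{\substack{i+j=n\\ i,j\geq 1}} t_it_j,
$$
a map of bidegree $(-n,n-2)$. The task is to produce $t_n$ of bidegree $(-n,n-1)$ with $t_0t_n + t_nt_0 = -\phi_n$. The crucial preliminary step is the cycle identity $t_0\phi_n = \phi_n t_0$, which I would extract by associativity: letting $T := \sum_{i=0}^{n-1} t_i$, one has $T(T^2) = (T^2)T$ as compositions of operators, and by the inductive hypothesis $(T^2)_m = 0$ for $m<n$ while $(T^2)_n = \phi_n$, so reading off the total-degree-$n$ components on both sides yields exactly $t_0\phi_n = \phi_n t_0$.

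Next, $\wedge S^c$-equivariance reduces the existence of $t_n$ to lifting the restriction $\widetilde{\phi}_n\colon G_p \to G_{p-n}\otimes K_{n-2}$ through the Koszul differential, and the cycle identity specializes to $t_0'\circ\widetilde{\phi}_n = 0$. For $n\geq 3$, since $\K$ resolves $R$ over $S$ and each $G_p$ is $S$-free, the complex $G_{p-n}\otimes_S\K$ is acyclic in positive homological degrees, so this Koszul cycle in degree $n-2\geq 1$ lifts to $\widetilde{t}_n\colon G_p \to G_{p-n}\otimes K_{n-1}$ with $t_0'\widetilde{t}_n = \pm\widetilde{\phi}_n$. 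For $n=2$, $\widetilde{\phi}_2 = (t_1')^2$ reduces to $0$ mod $I$ because $\overline{\G}$ is a complex, so its entries lie in $I = \im(t_0'\colon K_1\to K_0)$, providing the lift. Extending $\widetilde{t}_n$ by $\wedge S^c$-equivariance defines $t_n$ on all of $\G\otimes\K$, and the desired equation $t_0t_n + t_nt_0 = -\phi_n$ holds globally because both sides are equivariant and agree on $G_p\otimes 1$.

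For uniqueness of $R\otimes t_n$, two valid choices $t_n,t_n'$ differ by a cycle for the Hom-differential $\delta$, and the same acyclicity argument (using $n-1\geq 1$) writes $t_n - t_n' = \delta(\eta)$ for some $\eta$; since every entry of $t_0$ lies in $I$, reducing mod $I$ kills $\delta(\eta)$, so $R\otimes t_n$ is unique. The main obstacle is the sign bookkeeping in the $T\cdot T^2 = T^2\cdot T$ step: verifying that the total-degree-$n$ components really collapse to $t_0\phi_n$ and $\phi_n t_0$ requires care, though everything else follows routinely from the exactness of the Koszul resolution.
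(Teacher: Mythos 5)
Your proposal is correct and follows essentially the same path as the paper: induction on $n$, constructing $t_n$ on $G_p\otimes K_0$ by lifting through the acyclic Koszul complex and extending by $\wedge S^c$-equivariance, with uniqueness mod $I$ coming from the same acyclicity. Your associativity trick $T\cdot T^2 = T^2\cdot T$ for the cycle identity is just a repackaging of the paper's direct double-sum computation (which applies the inductive hypothesis twice), and your separate treatments of the $n=2$ case and of uniqueness are, if anything, slightly more careful than the paper's.
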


The positions of the maps $t_{0},\dots, t_{3}$, for example, are shown in the following figure
where, for clarity, the upper indices are not shown and not all the maps have been labeled:
\vglue -.5cm
 \begin{center} 
\begin{tikzpicture} [scale = 0.8, everynode/.style = {scale = 0.9}]
\node (00)  {$G_{0}\otimes K_{0}$};
\node (01)  [node distance=2.5cm, left of = 00] {$G_{1}\otimes K_{0}$};
\node (02)  [node distance=2.5cm, left of = 01]{$G_{2}\otimes K_{0}$}; 
\node (03)  [node distance=2.5cm, left of = 02]{$G_{3}\otimes K_{0}$};
\node (04)  [node distance=2.5cm, left of = 03]{$G_{4}\otimes K_{0}$};
\node (10)  [node distance = 1.7cm, below of=00]{$G_{0}\otimes K_{1}$};
\node (11)  [node distance = 1.7cm, below of=01]{$G_{1}\otimes K_{1}$};
\node (12)  [node distance = 1.7cm, below of=02]{$G_{2}\otimes K_{1}$};
\node (13)  [node distance = 1.7cm, below of=03]{$G_{3}\otimes K_{1}$}; 
\node (14)  [node distance = 1.7cm, below of=04]{$G_{4}\otimes K_{1}$}; 
\node (20)  [node distance = 1.7cm, below of=10]{$G_{0}\otimes K_{2}$};
\node (21) [node distance = 1.7cm, below of=11] {$G_{1}\otimes K_{2}$};
\node (22)  [node distance = 1.7cm, below of=12]{$G_{2}\otimes K_{2}$};
\node (23)  [node distance = 1.7cm, below of=13]{$G_{3}\otimes K_{2}$};
\node (24)  [node distance = 1.7cm, below of=14]{$G_{4}\otimes K_{2}$};
\node(0) [node distance = 1.1cm, left of = 04] {$\cdots\ $};
\node(1) [node distance = 1.1cm, left of = 14] {$\cdots\ $};
\node(2) [node distance = 1.1cm, left of = 24] {$\cdots\ $};
\node (30)  [node distance = .4cm, below of=20]{$\vdots$};
\node (31) [node distance = .4cm, below of=21] {$\vdots$};
\node (32)  [node distance = .4cm, below of=22]{$\vdots$};
\node (33)  [node distance = .4cm, below of=23]{$\vdots$};
\node (34)  [node distance = .4cm, below of=24]{$\vdots$};

\draw[->] (02) to node [above=-1pt, right=19pt]{$t_{2}$}(10);
\draw[->] (03) to node [above=-1pt, right=19pt]{$t_{2}$}(11);
\draw[->] (04) to node [above=-1pt, right=19pt]{$t_{2}$}(12);
\draw[->] (12) to node [below, left=2pt]{}(20);
\draw[->](13) to node [below, left=2pt]{}(21);
\draw[->] (14) to node [below, left=2pt]{}(22);
\draw[->] (03) to node[above=-23pt, right=60pt] {$t_{3}$}(20);
\draw[->] (04) to node [above=-23pt, right=60pt] {$t_{3}$}(21);
\draw[->] (20) to node[right] {$t_{0}$}(10);
\draw[->] (10) to node[right] {$t_{0}$}(00);
\draw[->] (21) to node[right] {}(11);
\draw[->] (11) to node[right] {}(01);
\draw[->] (22) to node[right] {}(12);
\draw[->] (12) to node[right] {}(02);
\draw[->] (23) to node[right] {}(13);
\draw[->] (13) to node[right] {}(03);
\draw[->] (24) to node[right] {}(14);
\draw[->] (14) to node[right] {}(04);
\draw[->] (01) to node[above]{$t_{1}$}(00);
\draw[->] (02) to node[above]{$t_{1}$}(01);
\draw[->] (03) to node[above]{$t_{1}$}(02);
\draw[->] (04) to node[above]{$t_{1}$}(03);
\draw[->] (11) to node[above]{}(10);
\draw[->] (12) to node[above]{}(11);
\draw[->] (13) to node[above]{}(12);
\draw[->] (14) to node[above]{}(13);
\draw[->] (21) to node[above]{}(20);
\draw[->] (22) to node[above]{}(21);
\draw[->] (23) to node[above]{}(22);
\draw[->] (24) to node[above]{}(23);

\end{tikzpicture} \end{center}
\vglue .2cm 
\mproof
 We construct the $t_{n}^{p,q}$ by induction on $n$.
 The condition $\sum_{i+j=n}t_{i}t_{j}$ holds for $n=0$ because $\K$ is a complex. The condition $\sum_{i+j=n}t_{i}t_{j}$ holds for $n=1$ by our choice of signs. 
 
Thus we assume that 
$t^{p,q}_{j}$ has been defined for all $j<n$. We next construct the maps $t_{n}^{p,0}$,
and we then define  $t_{n}^{p,q}$ for $q>0$ to be the unique maps that make
 $$
 \sum_{q}t_{n}^{p,q}: G_{p}\otimes \wedge S^{c} \to G_{p-n}\otimes \wedge S^{c}
 $$
 into a map of 
 free $\wedge S^{c}$ modules.
 
 Because $t_{0}^{p,0}=0$, the desired condition for $t_{n}^{p,0}$ is
 $$
 \sum_{i+j=n \atop j>0} t_{i}^{p-j,j-1} t_{j}^{p,0} =0.
 $$
 To simplify the notation, we drop the upper indices, which are functions of $n, p$ and $j$, and write the condition as
 $$
t_{0}t_{n} =  -\sum_{i+j=n \atop i,j>0} t_{i} t_{j}.
 $$

 Since $\K$ is acyclic,  both existence of $t_{n}$ and the uniqueness of $R\otimes t_{n}$ will follow if we show that  
$$
t_{0} \sum_{i+j=n \atop i,j>0} t_{i} t_{j}= 0.
$$
Using the induction hypothesis
$$
t_{0}t_{i} = -\sum_{\ell+m=i \atop \ell>0} t_{\ell} t_{m}
$$
for $i<n$, we get
$$
t_{0} \sum_{i+j=n \atop i,j>0} t_{i} t_{j}= -\sum_{\ell +m+j= n\atop j,\ell >0} t_{\ell} t_{m}t_{j} = -\sum_{\ell>0} t_{\ell} \sum_{m+j=n-\ell\atop j>0}t_{m}t_{j}.
$$
Since $\ell>0$ we can use the induction hypothesis again, and we see that
each sum
$$
\sum_{m+j=n-\ell\atop j>0}t_{m}t_{j}
$$
is 0, yielding the desired vanishing.
\mqed

\begin{cor}\label{GK existence}
With hypothesis as in  Proposition~\ref{t maps}, the sequence
$$
\G\K:\quad \cdots \map{}\sum_{i+j= n}G_{i}\otimes_{S}K_{j} \map{T_{n}} \sum_{i+j= n-1}G_{i}\otimes_{S}\LL_{j}\map{}\dots \map{} G_{0}\otimes K_{0}
$$
with
$$
T_{n}=\begin{pmatrix}
 t_{1}^{n,0}&t_{0}^{n-1,1}&0&\dots&0\\
t_{2}^{n,0}&t_{1}^{n-1,1}&t_{0}^{n-2,2}&\dots&0\\
\vdots&\vdots&\vdots&\ddots&\vdots\\
 t_{n}^{n,0}&t_{n-1}^{n-1,1}&t_{n-2}^{n-2,2}&\dots&t_{0}^{0,n}
\end{pmatrix}.
$$
is a complex.\mqed
\end{cor}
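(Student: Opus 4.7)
\mproof
The proof is a direct consequence of the relations $\sum_{i+j=n} t_i t_j = 0$ established in Proposition~\ref{t maps}; the task is essentially bookkeeping, and the plan is to recognize the composition $T_{n}\circ T_{n+1}$, broken up by source and target summands, as precisely one of these relations.

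First, I would repackage the matrix definition of $T_n$ in coordinate-free language. Unraveling the matrix shows that the component of $T_n$ from the summand $G_p\otimes K_q$ (with $p+q=n$) to the summand $G_{p-i}\otimes K_{q+i-1}$ is exactly $t_i^{p,q}$, where $i$ runs over $0\le i\le p$ (and we allow only those $i$ for which both $p-i\ge 0$ and $q+i-1\ge 0$, with the entry being $0$ otherwise). Thus
\[
T_n\big|_{G_p\otimes K_q} \;=\; \sum_{i\ge 0} t_i^{p,q}.
\]

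Next, I would fix a source summand $G_p\otimes K_q$ with $p+q=n+1$ and a target summand $G_{p'}\otimes K_{q'}$ with $p'+q'=n-1$. Every path through an intermediate summand $G_{p''}\otimes K_{q''}$ (with $p''+q''=n$) contributing to the $(p',q')$-component of $T_n\circ T_{n+1}$ must use a first map $t_a^{p,q}$ with $p''=p-a$, $q''=q+a-1$, followed by a second map $t_b^{p'',q''}$ with $p'=p''-b$, $q'=q''+b-1$. Setting $N:=p-p'=q'-q+2$, this forces $a+b=N$, while $a,b$ range over all nonnegative integers with that sum. Hence the $(p',q')$-component of $T_n\circ T_{n+1}$ restricted to $G_p\otimes K_q$ equals
\[
\sum_{a+b=N} t_b^{p-a,\,q+a-1}\circ t_a^{p,q}
\;=\;\Bigl(\sum_{a+b=N} t_b\,t_a\Bigr)\Big|_{G_p\otimes K_q},
\]
projected onto $G_{p'}\otimes K_{q'}$. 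By Proposition~\ref{t maps}, this sum vanishes, so $T_n\circ T_{n+1}=0$ and $\G\K$ is a complex.

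There is no real obstacle: the sign conventions have already been absorbed into Proposition~\ref{t maps} (the choice $t_0^{p,q}=1\otimes(-1)^p t_0'^{\,q}$ is exactly what makes the $N=0,1$ cases of $\sum t_i t_j=0$ hold), so once one matches the matrix entries to the operators $t_i^{p,q}$ and tracks the bidegrees, the relations of Proposition~\ref{t maps} give the claim summand by summand. The only care required is to record that contributions with $i<0$ (or $p-i<0$) do not exist and correspond to the zero entries in the upper-right triangle of the matrix.
\mqed
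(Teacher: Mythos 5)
Your proof is correct and is essentially the argument the paper intends: the paper states Corollary~\ref{GK existence} with no written proof, treating it as immediate bookkeeping from the relations $\sum_{i+j=n}t_it_j=0$ of Proposition~\ref{t maps}, and your identification of the matrix entries of $T_n$ with the bigraded components $t_i^{p,q}$ and of each $(p',q')$-component of $T_n\circ T_{n+1}$ with a bigraded piece of one of those relations is exactly that verification, carried out explicitly.
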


\begin{thm}\label{GK resolution} Let $S$ be a commutative ring,  let $\ff c$ be a regular sequence, and let $R = S/(\ff c).$
Let $\K = \K(\ff c)$ be the Koszul complex. If $\G$ is a sequence of maps of 
free $S$-modules  
$$
{\bf G}:\quad \cdots \map{t_{1}'} G_{p}\map{t_{1}'} G_{p-1}\map{t_{1}'}\cdots\map{t_{1}'} G_{0}.
$$
such that $R\otimes_{S}\G$ is an $R$-free resolution of an $R$-module $M$, then
the complex ${\bf G\LL}$ of Corollary~\ref{GK existence} is an $S$-free resolution of $M$.\mqed
\end{thm}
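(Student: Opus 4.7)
\mproof (Proposal.) The plan is to run a standard spectral sequence argument on a filtration of the complex $\G\K$ by the $\G$-degree.

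Concretely, let $C_{n}:=\bigoplus_{p+q=n}G_{p}\otimes_{S}K_{q}$ be the total complex equipped with the differential $T_{n}$ from Corollary~\ref{GK existence}. Because each operator $t_{k}^{p,q}:G_{p}\otimes K_{q}\to G_{p-k}\otimes K_{q+k-1}$ lowers the $\G$-degree by $k\geq 0$, the increasing filtration
$$
F_{a}C_{n}:=\bigoplus_{\substack{p+q=n\\ p\leq a}}G_{p}\otimes_{S}K_{q}
$$
is preserved by $T$. For a fixed total degree $n$ only the finitely many $a$ with $0\leq a\leq n$ matter, so the induced spectral sequence is bounded and converges to $H_{*}(\G\K)$.

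The $E^{0}$ page has $E^{0}_{p,q}=G_{p}\otimes_{S}K_{q}$, and the associated differential is precisely the $\G$-degree-preserving part of $T$, namely $t_{0}^{p,q}=(-1)^{p}\,1\otimes{t'_{0}}^{q}$. Thus each column is the Koszul complex $G_{p}\otimes_{S}\K$; since $G_{p}$ is $S$-free and $\K$ resolves $R$, we get
$$
E^{1}_{p,q}=
\begin{cases}G_{p}\otimes_{S}R=\overline{G}_{p}&\text{if }q=0,\\ 0&\text{if }q>0.\end{cases}
$$
The page is concentrated in a single row, so only the $d^{1}$ differential can be nonzero. It is induced by the $\G$-degree-dropping-by-one part of $T$, which at $K$-degree zero is exactly $t_{1}=t'_{1}\otimes 1$. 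After reducing mod $(\ff c)$ (which already happened in passing to $E^{1}$), this is the differential of $\overline{\G}=R\otimes_{S}\G$. By hypothesis $\overline{\G}$ is an $R$-free resolution of $M$, hence
$$
E^{2}_{p,q}=
\begin{cases}M&\text{if }(p,q)=(0,0),\\ 0&\text{otherwise.}\end{cases}
$$
The spectral sequence collapses at $E^{2}$, so $H_{0}(\G\K)=M$ and $H_{n}(\G\K)=0$ for $n\geq 1$; the identification of $H_{0}$ as an $S$-module is clear from the description of $T_{1}$, whose restriction to $G_{0}\otimes K_{0}=G_{0}$ sends $G_{0}$ onto $M$ via the original augmentation of $\overline{\G}$.

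The only points needing care are bookkeeping items: verifying that $t_{0}^{p,q}$ really gives the Koszul differential on $E^{0}$ up to the harmless sign $(-1)^{p}$ (immediate from Proposition~\ref{t maps}), and that the $d^{1}$ differential on $E^{1}$ identifies with the differential of $\overline{\G}$. I do not anticipate a real obstacle; all the work was done in setting up the higher CI-operators so that $\sum_{i+j=n}t_{i}t_{j}=0$, which is precisely what makes the filtered complex an object to which the standard spectral sequence machinery applies.
\mqed
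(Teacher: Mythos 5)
Your proposal is correct and is essentially the paper's own argument: the authors filter $\G\K$ by the $\G$-index, observe that the associated graded complex is a direct sum of copies of the Koszul resolution $\K$ of $R$ with differential $t_{0}$, and then identify the $E_{1}$ differential with that of $R\otimes\G$, which resolves $M$. Your version merely spells out the convergence and the $E^{2}$ collapse more explicitly.
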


\mproof
To see that the complex is a resolution, we note that it is filtered by the subcomplexes
involving just the $G_{i}\otimes K_{j}$ with $i\leq m$: all the $t_{i}$ except $t_{0}$ decrease the index of $G_i$, while $t_{0}$ keeps the index of $G_i$ the same. The associated graded complex is thus the direct sum of the $G_{i}\otimes \K$, with differentials $t_{0}$; that is, the direct sum of copies of the resolution $\K$ of $R$. The homology  is thus
$\Ho_{i}({\rm gr}(\G\K))= R\otimes G_{i}$. 

In the spectral sequence converging from the homology of the associated graded complex ${\rm gr}(\G\K)$ to the homology of $\G\K$, the $E_{1}$ page thus has nonzero terms
$E_{1}^{(i,0)}=R\otimes G_{i}$ in position $(i,0)$, and differentials induced by the differential of $\G\K$. But the only differential that reduces the first index by
only 1 is $t_{1}$; thus the $E_{1}$ differential is the differential of the complex $R\otimes \G$, which is a resolution of $M$.
\mqed

For any $R$-modules $M,N$ there is a spectral sequence
$$
\tor_{i}^{R}(\tor_{j}^{S}(M,R), N) \Rightarrow \tor_{i+j}^{S}(M,N).
$$
that comes from the double complex $\G\otimes_{S}(\K\otimes_{S}N)$, and that
 allows the computation of a certain associated graded module of
$\Tor^{S}(M,N)$.
It is natural to expect that the $t_{i}$ are special liftings of the differentials in this spectral sequence;
Burke~\cite{Burke} shows that this is indeed the case. 
The complex $\G\K$ allows  the computation of
$\Tor^{S}(M,N)$ itself. 

\section{The Bernstein-Gel'fand-Gel'fand correspondence (BGG)} \label{BGG}
\def\sym{{\rm Sym}}

The results in the next section depend on properties of the 
Bernstein-Gel'fand-Gel'fand correspondence (BGG)  from \cite{EFS}, and in this short section we review what is necessary.

Let $W$ be the vector space generated by the 
regular sequence $\ff c$ so that $E = \wedge W$. We set $W = V^\vee$ and $\CR := \sym(W)$. In this section, for simplicity, we regard both $V$ and $W$ as having degree 1, though in the next section we will need to adjust to the situation where $W$ has degree 2.

The BGG correspondence establishes equivalences between the category of $\mathbb Z$-graded $\CR$-modules and linear free complexes over $E$, and also between $\mathbb Z$-graded $E$-modules and linear free complexes over $\CR$. 

For example, giving a graded vector space
$U = \oplus_{i}U_{i}$ the structure of a graded $\CR$-module is the same as giving multiplication maps $\mu_{i}: W\otimes_{k}U_{i}\to U_{i+1}$ that satisfy the commutativity and associativity conditions. But giving the map
$\mu_{i}$ is equivalent to giving a map $\delta_{i}: U_{i}\to \hom_{k}(V,U_{i+1})$, and this is equivalent, in turn, to giving a linear map of free $E$-modules
$$\hom_{k}(E, U_{i})(-1) \to \hom_{k}(E,U_{i+1})\,.$$ It turns out that the associative and commutative conditions on the $\mu_{i}$ are equivalent to the conditions
$\delta_{i+1}\delta_{i} = 0$ for all $i$. We write ${\mathbb R}(U)$ for the resulting
linear $E$-free complex with $i$-th term $\hom(E,U_{i})(i)$.

Similarly, given a graded $E$-module $T = \oplus T_{i}$ we construct a linear
$\CR$-free complex ${\mathbb L}(T)$ having $i$-the term $(\CR\otimes T_{i})(i)$. Here are the results we need:

\begin{thm} {\bf(\cite[Theorem 3.7 (Reciprocity)]{EFS})}\label{reciprocity} Let $U,T$ be finitely generated graded modules over
$\CR$ and $E$, respectively. 
  The complex 
$\mathbb L(T)$ is a free resolution of $U$ if and only if the complex
${\mathbb R}(U)$ is an injective resolution of $T$. 
\end{thm}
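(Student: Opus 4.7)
The plan is to reduce both conditions to a single homological statement, then use a Hom-tensor adjunction mediated by the Koszul bicomplex relating $\CR$ and $E$.

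First I would dispatch the adjective \emph{injective} from condition (ii). Since $E$ is a finite-dimensional graded Frobenius algebra, it is self-injective, and consequently $\hom_{k}(E,V)$ is injective over $E$ for every graded vector space $V$. Each term $\hom_{k}(E,U_{i})(i)$ of $\mathbb{R}(U)$ is therefore automatically $E$-injective, so (ii) collapses to the exactness of the augmented complex $T\to \mathbb{R}(U)^{0}\to \mathbb{R}(U)^{1}\to \cdots$.

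Next I would set up a single bigraded object through which both constructions factor. Let $\mathcal{K}$ denote the Koszul complex, viewed as a free resolution of $k$ over $\CR$ whose $p$-th term is $\wedge^{p}W\otimes_{k}\CR(-p)=E_{p}^{\vee}\otimes_{k}\CR(-p)$. The linearity built into the BGG functors lets one identify, up to regrading,
\[
\mathbb{L}(T)\ \simeq\ T\otimes_{E}\mathcal{K},\qquad \mathbb{R}(U)\ \simeq\ \hom_{\CR}(\mathcal{K},U),
\]
so both complexes arise from the same bicomplex $T\otimes_{E}\mathcal{K}\otimes_{\CR}U^{\bullet}$-type construction via the two natural filtrations. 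The plan is then to invoke the Hom–tensor adjunction
\[
\hom_{\CR}(T\otimes_{E}\mathcal{K},\,U)\ \cong\ \hom_{E}\bigl(T,\,\hom_{\CR}(\mathcal{K},U)\bigr),
\]
and interpret the two sides. Condition (i), that $\mathbb{L}(T)=T\otimes_{E}\mathcal{K}$ is a resolution of $U$, means precisely that $\hom_{\CR}(\mathbb{L}(T),N)$ computes $\Ext_{\CR}^{\bullet}(U,N)$ for any $N$; taking $N$ to range over injective cogenerators and using the identification above turns this into the statement that $\hom_{E}(T,\mathbb{R}(U))$ computes $\Ext_{E}^{\bullet}(T,\mathbb{R}(U)^{0})$, i.e.\ that $\mathbb{R}(U)$ is an injective resolution of $T$. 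The converse is symmetric.

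The conceptual substance is standard — it is the statement that $\mathbb{L}$ and $\mathbb{R}$ are quasi-inverse equivalences between appropriate homotopy categories, with the Koszul complex as the duality kernel. The main obstacle, and what must be checked with care, is bookkeeping: reconciling the two gradings (internal vs.\ homological), verifying that the differentials induced by the $E$-action on $T$ and the $\CR$-action on $U$ correspond correctly under the adjunction, and tracking the shifts $(-i)$ and $(i)$ that make the resulting complexes linear in the intended sense. Once these compatibilities are established, reciprocity follows from the self-injectivity of $E$ and the adjunction displayed above.
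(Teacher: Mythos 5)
First, for calibration: the paper does not prove this statement at all --- it is imported verbatim from [EFS, Theorem~3.7], and the argument there runs through an explicit identification of the homology of ${\mathbb R}(U)$ and ${\mathbb L}(T)$ (via the regularity criterion quoted here as Corollary~\ref{BGG-regularity}, together with a computation of the kernel of ${\mathbb R}(U)^{0}\to{\mathbb R}(U)^{1}$), not through a formal adjunction. Your opening reduction is correct: $E$ is graded self-injective, so each term $\hom_{k}(E,U_{i})(i)$ is injective and condition (ii) is purely a question of exactness.

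The genuine gap is in your third paragraph. Even after the kernel is corrected --- $T\otimes_{E}\mathcal K$ does not typecheck for the one-sided Koszul resolution of $k$ over $\CR$, and the Koszul contraction differential is not the BGG differential; the right object is the bimodule complex $\CR\otimes_{k}E$ with differential given by multiplication by $\sum_{i}x_{i}\otimes e_{i}$ --- the adjunction only yields an isomorphism of Hom-complexes $\hom_{\CR}({\mathbb L}(T),U)\cong\hom_{E}(T,{\mathbb R}(U))$. The theorem, however, concerns the acyclicity of ${\mathbb L}(T)$ and ${\mathbb R}(U)$ themselves, and your deduction does not extract that: for $N\ne U$ the right-hand side of the adjunction is $\hom_{E}(T,\hom_{\CR}(\mathcal K,N))$, not $\hom_{E}(T,{\mathbb R}(U))$, so letting $N$ range over injective cogenerators never produces a statement about ${\mathbb R}(U)$; and ``$\hom_{E}(T,{\mathbb R}(U))$ computes $\Ext_{E}(T,{\mathbb R}(U)^{0})$'' is not the condition that ${\mathbb R}(U)$ is an injective resolution of $T$ (such a resolution computes $\Ext_{E}(-,T)$, and one must first know what it resolves). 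What actually must be shown is (a) that ${\mathbb R}(U)$ is acyclic, which in [EFS] follows from Corollary~\ref{BGG-regularity} because a module with a linear free resolution has regularity $0$, and (b) that $\ker({\mathbb R}(U)^{0}\to{\mathbb R}(U)^{1})=T$, which requires recovering the $E$-module $T$ from the linear strand of the resolution of $U$ (concretely $T_{i}\cong\tor_{i}^{\CR}(U,k)_{i}$, with its induced $E$-action). Neither step appears in your sketch. Finally, the converse is not literally ``symmetric'': $E$ and $\CR$ play asymmetric roles (finite free resolutions versus infinite injective resolutions), so it needs its own, parallel, argument. The obstacle here is not grading bookkeeping but the homological identification itself.
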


Since the complex $\mathbb L(T)$ is linear, the equivalent conditions of the Theorem can only be satisfied
if the Castelnuovo-Mumford regularity  of $U$ is 0. In fact this is sufficient:

\begin{cor}{\bf(\cite[Corollary 2.4]{EFS})} \label{BGG-regularity} Suppose that $U$ is a finitely generated graded $\CR$-module.
 The complex ${\mathbb R}(U)$  is acyclic
---that is, the only homology of ${\mathbb R}(U)$ is $\Ho^{0}$---if and only if
$U$ has Castelnuovo-Mumford regularity 0.
\end{cor}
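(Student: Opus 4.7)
The plan is to deduce this from Theorem~\ref{reciprocity} combined with the observation that each term $\hom_k(E, U_i)(i)$ of ${\mathbb R}(U)$ is an injective $E$-module, because $E$ is a Frobenius algebra (so $\hom_k(E,-)$ takes vector spaces to injective $E$-modules). In particular, any cochain complex whose terms are of this form, and whose only nonzero cohomology sits in degree $0$, is automatically an injective resolution of that cohomology module.

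For the direction ``$\reg U = 0$ implies ${\mathbb R}(U)$ has only $\Ho^{0}$,'' the first step is to recognize the minimal $\CR$-free resolution of $U$ as a complex of the form ${\mathbb L}(T)$ for some finitely generated graded $E$-module $T$. I would set $T_i := \tor_i^{\CR}(U,k)_i$ placed in internal degree $i$; finite dimensionality of each $T_i$, and vanishing for $i \gg 0$, follow from the Hilbert Syzygy Theorem over the polynomial ring $\CR$. The linear differentials of the minimal resolution have entries in $W$ and thus, via the identification $W = V^\vee$, correspond to maps $T_i \to \hom_k(V, T_{i-1})$ which, by the dictionary recalled at the start of Section~\ref{BGG}, make $T = \bigoplus_i T_i$ into a graded $E$-module (the identity $\partial^2 = 0$ translates into the module axioms). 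Reciprocity applied to this $T$ then says that ${\mathbb R}(U)$ is an injective resolution of $T$, which is exactly the required vanishing.

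For the converse, the natural candidate is $T := \Ho^0({\mathbb R}(U))$. Since $U$ is a finitely generated graded module over the Noetherian polynomial ring $\CR$, each $U_i$ is finite dimensional, hence so are $\hom_k(E, U_i)$ and thus also $T$; in particular $T$ is finitely generated over $E$. Because the terms of ${\mathbb R}(U)$ are injective and the only nonzero cohomology is $T$ in degree $0$, the complex ${\mathbb R}(U)$ is an injective resolution of $T$. Reciprocity then gives that ${\mathbb L}(T)$ is a free $\CR$-resolution of $U$. Since ${\mathbb L}(T)$ is by construction a linear complex with $0$-th term $\CR \otimes T_0$, the module $U$ admits a linear free resolution generated in degree $0$, and therefore $\reg U = 0$.

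The main technical point is the identification in the first direction of a linear minimal free resolution over $\CR$ with a complex of the form ${\mathbb L}(T)$; once the $E$-module structure on $T$ is constructed from the linear differentials this is tautological from the definitions in Section~\ref{BGG}, and finite generation of $T$ on either side is automatic from finite dimensionality of the Betti spaces. No further input is needed beyond Reciprocity and the Frobenius property of $E$.
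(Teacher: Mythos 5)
The paper offers no proof of this corollary at all: it is quoted from \cite[Corollary 2.4]{EFS}, so there is nothing internal to compare your argument against. Your reconstruction via Theorem~\ref{reciprocity} is the natural route once reciprocity is available, and its overall architecture --- the dictionary between linear free complexes over $\CR$ and graded $E$-modules, the injectivity of $\hom_k(E,-)$ applied to vector spaces because $E$ is Frobenius, and the two applications of reciprocity --- is sound.

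There is, however, one step that is false as written. In the forward direction you assert that $\reg U=0$ lets you ``recognize the minimal $\CR$-free resolution of $U$ as a complex of the form ${\mathbb L}(T)$'' with $T_i=\tor_i^{\CR}(U,k)_i$. Regularity $0$ gives only the vanishing $\tor_i^{\CR}(U,k)_j=0$ for $j>i$; it does not force vanishing for $j<i$, so the minimal resolution need not be linear. Concretely, $U=\CR\oplus\CR(1)$ has regularity $0$, its minimal free resolution is $U$ itself with generators in degrees $0$ and $-1$, and ${\mathbb R}(U)$ has homology in cohomological degrees $0$ and $-1$ --- so this $U$ is in fact a counterexample to the corollary as literally transcribed here. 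The missing hypothesis is that $U$ is concentrated in degrees $\ge 0$ (equivalently, one should apply the statement to the truncation $U_{\ge 0}$, which is how the source handles it). Under that hypothesis regularity $0$ forces all generators into degree $0$, and then minimality plus regularity forces the $i$-th syzygies into degree exactly $i$, so the resolution is linear and your argument closes. The same hypothesis is silently used in your converse direction, where calling ${\mathbb R}(U)$ ``an injective resolution of $H^0$'' presupposes that the complex has no terms in negative cohomological degree. Since every module to which the paper applies the corollary ($\ext_R^{\even}(M,k)$ and its shifted odd analogue) vanishes in negative degrees and is generated in degree $0$, the omission is harmless in context, but it should be stated; without it the step genuinely fails.
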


\section{Free resolutions of $\tor^{S}(M,k)$ and $\ext_{R}(M,k)$}\label{Identifying the Ext Module}

We will make use of the BGG correspondence in two ways: first,
if $\F$ is any free complex of $R = S/(\ff c)$-modules, where $\ff c$ is a regular sequence, then the CI operators on $\F$ define 
an $\CR$-module structure on $H_{*}(\F \otimes_{R}k)$, and thus, since the CI operators have degree 2,
we get a linear free complex of $E$ modules
$$
 \cdots \map{t_{2}} \Ho_{2i+s}(\F\otimes_{R} k)\otimes_{k}E \map{t_{2}} \Ho_{2i+s-2}(\F\otimes_{R} k)\otimes_{k}E \map{t_{2}} \cdots.
$$
for $s = 0$ and for $s=1$. When $M$ is a high $R$-syzygy and $\F$ is its minimal free resolution,
we shall see that this is a resolution of $\tor^{S}(M,k)$.

Second, given an $S$-module $M$, the action of $E$ on the sub and quotient modules $T'$ and $T''$ of $\tor^{S}(M,k)$ 
gives us two linear complexes of $\CR$ modules; when $M$ is a high $R$-syzygy, we shall see that these
are minimal $\CR$-free resolutions of $\ext^{even}_{R}(M,k)$ and $\ext^{odd}_{R}(M,k)$, respectively.

To prove these results, we will use the complexes constructed in Corollary~\ref{GK existence}.
With notation and hypothesis as in Proposition~\ref{t maps}, we may regard $t^{0,*}_{2}$ as a map $\G \to \G\otimes S^{c}$ whose
components $t_{2,i}$  satisfy $\sum_{i}f_{i}t_{2,i} = t_{1}^2$; that is, the $R\otimes t_{2,i}$ are the same as the CI-operators defined in 
\cite{Ei}. 

\begin{cor}\l{two complexes}
With hypotheses as in Corollary~\ref{GK existence}, suppose that $R\otimes \G$ is a minimal complex. The induced maps
\begin{align*}
 t_{2}: G_{i+2}\to &G_{i}\otimes k^{c}\\
 t_{3}: G_{i+3}\to &G_{i}\otimes \wedge^{2 }k^{c}
\end{align*}
yield a complex of the form:
\vglue .5cm
 \begin{center} 
\begin{tikzpicture} [scale = 0.8, everynode/.style = {scale = 0.9}]
\node (DE)  {$\dots$}; 
\node (4)[node distance=2.7cm, right of=DE]       {$G_{4}\otimes  E$}; 
\node (2)   [node distance=4cm, right of=4]      {$ G_{2}\otimes  E$}; 
\node (0)  [node distance=4cm, right of=2]       {$G_{0}\otimes E$}; 

\node (OD)  [node distance=1cm, below of=DE]   {$\dots$}; 
\node (O1)    [node distance=1cm, below of=4]      {$\bigoplus$}; 
\node (O2)    [node distance=1cm, below of=2]      {$\bigoplus$}; 
\node (O3)     [node distance=1cm, below of=0]     {$\bigoplus$};  

\node (DO)  [node distance=1cm, below of=OD]  {$\dots$}; 
\node (5)    [node distance=1cm, below of=O1]      {$G_{5}\otimes E$}; 
\node (3)    [node distance=1cm, below of=O2]      {$G_{3}\otimes E$}; 
\node (1)      [node distance=1cm, below of=O3]     {$G_{1}\otimes E\,.$}; 

\draw[->] (DE) to node [above=10pt, right=-8pt]  {$t_{2} $} (4);
\draw[->] (4) to node [above=10pt, right=-8pt]  {$t_{2} $} (2);
\draw[->] (2) to node [above=10pt, right=-8pt]  {$t_{2} $} (0);

\draw[->] (DO) to node [above=10pt, right=-8pt]  {$t_{2} $} (5);
\draw[->] (5) to node [above=10pt, right=-8pt]  {$t_{2} $} (3);
\draw[->] (3) to node [above=10pt, right=-8pt]  {$t_{2} $} (1);

\draw[->] (5) to node [above=6pt, right=-15pt]  {$t_{3} $} (2);
\draw[->] (3) to node [above=6pt, right=-15pt]  {$t_{3} $} (0);

\end{tikzpicture} \end{center}
\end{cor}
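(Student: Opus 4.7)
The plan is to reduce the claim---which asks that the displayed diagram be a complex of $E$-modules---to the master equations $\sum_{i+j=n} t_it_j = 0$ of Proposition~\ref{t maps}, by first tensoring with $k$ over $S$ to kill the low-order operators $t_0$ and $t_1$.

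First, I would check that after applying $k \otimes_S -$, both $t_0$ and $t_1$ vanish. For $t_0^{p,q} = 1 \otimes (-1)^p {t'}_0^q$, the entries of the Koszul differential $t_0'$ lie in $(\ff c) \subset \gm$, so $k \otimes_S t_0 = 0$. For $t_1 = t_1' \otimes 1$, the map $R \otimes t_1$ is the differential of $R \otimes \G$, which is minimal by hypothesis, so the entries of $t_1'$ already lie in $\gm$ and therefore $k \otimes_S t_1 = 0$. The higher operators $t_2$ and $t_3$ are $\wedge S^c$-linear by construction, so after the reduction they become $E$-linear maps between the free $E$-modules $\overline{G}_i \otimes_k E$, with $t_2$ shifting internal $E$-degree by $1$ and $t_3$ shifting it by $2$.

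Next, I would apply the identity $\sum_{i+j=n}t_it_j=0$ for $n=4$ and $n=5$ and reduce modulo $\gm$. Every term involves at least one factor of $t_0$ or $t_1$ except $t_2 \circ t_2$ when $n=4$, and the pair $t_2 t_3 + t_3 t_2$ when $n=5$. (The boundary terms $t_4 t_0$ and $t_5 t_0$ are automatically zero on the component $G_\bullet \otimes K_0$ anyway, since $K_{-1}=0$.) Tensoring with $k$ therefore yields $\overline{t}_2 \circ \overline{t}_2 = 0$ and $\overline{t}_2 \overline{t}_3 + \overline{t}_3 \overline{t}_2 = 0$.

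Finally, I would read off what these two identities mean for the diagram. The first says that each of the two rows---the even and the odd---is a complex under the horizontal arrows $\overline{t}_2$, and the second says that the diagonal arrows $\overline{t}_3$ anticommute with the horizontal differentials $\overline{t}_2$. Taken together, these are precisely the compatibilities required for the combined map $\overline{t}_2 + \overline{t}_3$ on the total graded $E$-module $\bigoplus_n(\overline{G}_{2n} \oplus \overline{G}_{2n+1})\otimes_k E$ to square to zero, i.e., for the displayed diagram to form a complex. No single step is a serious obstacle; the main care lies in tracking the bidegrees of the components $t_i^{p,q}$ and confirming that the only terms surviving in $d^2$ after the reduction mod $\gm$ are exactly the ones that appear as compositions in the picture.
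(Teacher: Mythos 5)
Your proposal is correct and follows essentially the same route as the paper: reduce modulo $\gm$ so that $t_0$ (Koszul differential) and $t_1$ (minimality of $R\otimes\G$) vanish, then read off $\overline{t}_2\,\overline{t}_2=0$ and $\overline{t}_2\,\overline{t}_3+\overline{t}_3\,\overline{t}_2=0$ from the identities $\sum_{i+j=n}t_it_j=0$ for $n=4,5$ of Proposition~\ref{t maps}. Your write-up is in fact somewhat more explicit than the paper's about why $t_0\otimes k=0$ and about the bookkeeping of which terms survive.
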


\mproof
By minimality, $t_{1}\otimes k=0$, so $\Ho_{i}\G = G_{i}\otimes k$. Thus by Corollary~\ref{two complexes} each row of the diagram is a complex.
Further, Proposition~\ref{t maps} gives the identity $\sum_{i=0}^{5}t_{i}t_{5-i} = 0$, and tensoring with $k$ we get
$$
(t_{2}t_{3}+t_{3}t_{2}) \otimes k = 0
$$
as required.
\mqed

Note that, if $R\otimes \G$ is the minimal resolution of an $R$-module $M$, then $G_{i}\otimes E = \tor^{R}_{i}(M,k)$.

\begin{thm}\label{main7} Let $\ff c \subset S$ be a regular sequence in a regular local ring with maximal ideal $\mm$
and residue field $k$. Let $I = (\ff c)$ and let $R = S/I$.
Let $\CR: =  k[\chi_{1},\dots, \chi_{c}]$ be the ring of CI operators.
If $\reg\ \ext^{\even}_{R}(M,k) =0$ and $\reg\ \ext^{\odd}_{R}(M,k) =1$  as $\CR$-modules, where
$\chi_{i}$ acts on $\ext_{R}(M,k) = \hom_{k}(\tor^{R}(M,k),k)$ via the action of $t_{2,i}$ on $\tor^{R}(M,k)$,  then
the complex ${\bf T}(M)$:
 \begin{center} 
\begin{tikzpicture} [scale = 0.8, everynode/.style = {scale = 0.9}]
\node (DE)  {$\dots$}; 
\node (4)[node distance=2.7cm, right of=DE]       {$\Tor^{R}_{4}(M,k)\otimes_{} E$}; 
\node (2)   [node distance=4cm, right of=4]      {$ \Tor^{R}_{2}(M,k)\otimes_{} E$}; 
\node (0)  [node distance=4cm, right of=2]       {$\Tor^{R}_{0}(M,k)\otimes_{}E$};

\node (O1)    [node distance=1cm, below of=4]      {$\bigoplus$}; 
\node (O2)    [node distance=1cm, below of=2]      {$\bigoplus$}; 
\node (O3)     [node distance=1cm, below of=0]     {$\bigoplus$};  

\node (DO)  [node distance=1cm, below of=OD]  {$\dots$}; 
\node (5)    [node distance=1cm, below of=O1]      {$\Tor^{R}_{5}(M,k)\otimes_{}E$}; 
\node (3)    [node distance=1cm, below of=O2]      {$\Tor^{R}_{3}(M,k)\otimes_{}E$}; 
\node (1)      [node distance=1cm, below of=O3]     {$\Tor^{R}_{1}(M,k)\otimes_{}E$}; 

\draw[->] (DE) to node [above=10pt, right=-8pt]  {$t_{2} $} (4);
\draw[->] (4) to node [above=10pt, right=-8pt]  {$t_{2} $} (2);
\draw[->] (2) to node [above=10pt, right=-8pt]  {$t_{2} $} (0);

\draw[->] (DO) to node [above=10pt, right=-8pt]  {$t_{2} $} (5);
\draw[->] (5) to node [above=10pt, right=-8pt]  {$t_{2} $} (3);
\draw[->] (3) to node [above=10pt, right=-8pt]  {$t_{2} $} (1);

\draw[->] (5) to node [above=6pt, right=-15pt]  {$t_{3} $} (2);
\draw[->] (3) to node [above=6pt, right=-15pt]  {$t_{3} $} (0);

\end{tikzpicture} \end{center}
is a minimal free resolution of $\tor^{S}(M,k)$ as a module over $E=\tor^{S}(R,k)$. Moreover, the upper row is a minimal free resolution of the submodule
$T':= E\cdot \tor^{S}_{0}(M,k)\subset \tor^{S}(M,k)$, and the lower row is a minimal free resolution of the quotient $T'' = \tor^{S}(M,k)/T'$.
\end{thm}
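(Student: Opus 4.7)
The plan is to construct $\mathbf{T}(M)$ as the surviving piece, after $\otimes_S k$, of the inverse Eisenbud--Shamash resolution of Section~\ref{inverseShamash}, and then to invoke BGG together with the regularity hypotheses to conclude exactness. I would lift a minimal $R$-free resolution $R\otimes\G$ of $M$ to a sequence $\G$ of $S$-linear maps and form the $S$-free resolution $\G\K$ of Theorem~\ref{GK resolution}. Because every $t_i$ is $\wedge S^c$-linear on the Koszul factor (Proposition~\ref{t maps}), the homotopies $e_j$ for $f_j$ make $\G\K\otimes_S k$ into a complex of free $E$-modules. Minimality of $R\otimes\G$ forces $t_1\otimes k=0$, and $t_0\otimes k=0$ because the $f_j$ lie in $\mm$; so the only surviving differentials on $\G\K\otimes k$ are the $t_\ell\otimes k$ for $\ell\geq 2$. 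Splitting by parity of the $G$-index then yields precisely the $t_2$ and $t_3$ patterns of $\mathbf{T}(M)$, which is already a complex by Corollary~\ref{two complexes}.

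Next, I would recognise each row of $\mathbf{T}(M)$, equipped with only its $t_2$ differential, as the linear $E$-free complex assigned by BGG to $\ext_R^{\even}(M,k)$ (respectively $\ext_R^{\odd}(M,k)$). This identification rests on the fact (see \cite{Ei}) that the reductions $R\otimes t_{2,\ell}$ are the classical CI operators whose $k$-linear duals act as multiplication by $\chi_\ell$ on $\ext_R(M,k)$; the BGG recipe turns this multiplication datum into exactly the displayed differentials. The hypotheses $\reg_\CR\ext_R^{\even}(M,k)=0$ and $\reg_\CR\ext_R^{\odd}(M,k)=1$ combined with Corollary~\ref{BGG-regularity} then imply that each row is acyclic in positive homological degree, hence is a minimal linear $E$-free resolution of its $H_0$. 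The natural augmentation $\tor^R_0(M,k)\otimes E\to\tor^S(M,k)$ sending $\bar m\otimes\eta\mapsto \eta\cdot\bar m$ has image $T'$ by the very definition of $T'$, and the Betti counts of Theorem~\ref{resTor} together with Theorem~\ref{twores}(ii) force $H_0$ of the upper row to be $T'$. The identification of the lower row with a minimal resolution of $T''$ is analogous, with augmentation determined by the connecting map of $0\to T'\to\tor^S(M,k)\to T''\to 0$.

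The final step is to identify the cross maps $t_3$ with that connecting homomorphism, so that $\mathbf{T}(M)$, read as the mapping cone of the shifted lower-row resolution into the upper-row resolution, is a free resolution of $\tor^S(M,k)$---precisely the mapping-cone structure predicted in Theorem~\ref{twores}(iii). Minimality is then automatic, since the matrix entries of $t_2$ lie in $E_1$ and those of $t_3$ in $E_2$, both contained in $\mm_E$. The main obstacle is this last identification of $t_3\otimes k$ with the connecting map; I expect to extract it from the mixed Eisenbud--Shamash relation $t_0t_3+t_1t_2+t_2t_1+t_3t_0=0$ together with the vanishings $t_0\otimes k=t_1\otimes k=0$, which pin $t_3\otimes k$ down to exactly the chain-level datum making the mapping cone a resolution rather than merely a complex of complexes. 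Tracking the filtration of $\G\K\otimes k$ by $G$-index should then realise the whole of $\mathbf{T}(M)$ as the minimal $E$-free resolution of $\tor^S(M,k)$.
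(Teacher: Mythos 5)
Your overall architecture is the same as the paper's: the complex structure of ${\bf T}(M)$ comes from Proposition~\ref{t maps} and Corollary~\ref{two complexes}, acyclicity of each row comes from the regularity hypotheses via Corollary~\ref{BGG-regularity} (after applying $\hom_k(-,k)$ and $\hom_k(E,k)\cong E(c)$ --- the rows are the duals of the BGG complexes of $\ext^{\even}_R(M,k)$ and $\ext^{\odd}_R(M,k)$, not those complexes themselves), and minimality is clear from the degrees of $t_2$ and $t_3$. The genuine gap is at the decisive step you yourself flag: identifying $\Ho_{0}({\bf T}(M))$ with $\tor^{S}(M,k)$, i.e.\ showing that $t_3\otimes k$ realizes the extension $0\to T'\to\tor^{S}(M,k)\to T''\to 0$ rather than some other extension of $T''$ by $T'$. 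The mechanism you propose --- the relation $t_0t_3+t_1t_2+t_2t_1+t_3t_0=0$ together with $t_0\otimes k=t_1\otimes k=0$ --- reduces to $0=0$ modulo $\mm$ and says nothing about $t_3\otimes k$; the next relation only yields $(t_2t_3+t_3t_2)\otimes k=0$, i.e.\ that ${\bf T}(M)$ is a complex. Nothing in your argument excludes, say, $t_3\otimes k=0$, in which case the still acyclic, still minimal mapping cone would resolve $T'\oplus T''$; the example following Theorem~\ref{resTor} shows this need not be isomorphic to $\tor^{S}(M,k)$. A secondary instance of the same problem: matching Betti numbers (Theorem~\ref{resTor}) show only that $\Ho_0$ of the upper row and $T'$ have the same minimal free resolution ranks; to get an isomorphism you must still verify that the augmentation $\tor^{R}_{0}(M,k)\otimes E\to\tor^{S}(M,k)$ annihilates the image of $t_2$. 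Note also that $\G\K\otimes_S k$ is not ${\bf T}(M)$ up to regrading --- it contains $t_\ell\otimes k$ for all $\ell\ge 2$ and its homology is $\tor^{S}(M,k)$, whereas ${\bf T}(M)$ is meant to resolve that homology.

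The paper closes the gap by using the inverse Shamash resolution quantitatively, not only for the sign relations. By Theorem~\ref{GK resolution} the homology of $\G\K\otimes_S k$ is $\tor^{S}(M,k)$, and writing out the strands of $\G\K\otimes k$ that compute $\tor^{S}_{i}(M,k)$ for $i\le 2$ one finds they are built exactly from the components $t_2\otimes k$ and $t_3\otimes k$ occurring in ${\bf T}(M)$. Exactness of the rows makes $t_2\colon\tor^{R}_{2}(M,k)\otimes E_0\to\tor^{R}_{0}(M,k)\otimes E_1$ injective, whence $\Ho_{0}({\bf T}(M))$ agrees with $\tor^{S}(M,k)$ in internal degrees $\le 2$; since both modules are generated in degrees $0,1$ with relations in degrees $\le 2$ (for $\tor^{S}(M,k)$ this is the $1$-regularity from Theorems~\ref{twores} and~\ref{reg1}), they agree in all degrees, and the sub/quotient identifications of the two rows then follow. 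You should replace the ``pinning down $t_3$'' step with this low-degree comparison; the rest of your outline then goes through.
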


Note that, if $M$ is a minimal higher matrix factorization module then, by Corollary~\ref{extreg},  the $\CR$-modules $\ext^{\even}_{R}(M,k)$ and $\ext^{\odd}_{R}(M,k)$ satisfy the 
regularity hypothesis.

We think of the minimal $E$-free resolution of $\tor^{S}(M,k)$ as having two ``strands'':
the resolution of $T'$,  and the 
resolution of $T''$. 

\mspace

\mproof
 We first show that ${\bf T}(M)$ is acyclic.
By Corollary~\ref{BGG-regularity}, the complexes corresponding to $\ext^{\even}(M,k)$ and $\ext^{\odd}(M,k)$ are
acyclic. Since $\tor^{R}(M,k)$ is the graded dual of $\ext_{R}(M,k)$ and $E$ is injective as an $E$-module, the rows of the complex
in the Theorem are acyclic. The total complex ${\bf T}(M)$ is the mapping cone of the map $t_{3}$ between these complexes, so it is acyclic as well.

Now let $\G$ be a sequence of maps of free $S$-modules such that $\G\otimes R$ is a minimal $R$-free resolution of $M$. By
Theorem~\ref{GK resolution} the homology of the complex $\G\K\otimes k$ is $\tor^{S}(M,k)$. In particular, 
$\tor^{S}_{0}(M,k), \tor^{S}_{1}(M,k)$ and  $\tor^{S}_{2}(M,k)$ are the homology of the following complexes at the middle position:

\begin{center} 
\scalebox{1}{ 
\begin{tikzpicture} 
\node (-10)       {$0$}; 
\node (00)   [node distance=4cm, right of=-10]    {$\Tor^{R}_{0}(M,k)\otimes_{}E_{0}$}; 
\node (0-1)   [node distance=4cm, right of=00]    {$0\ ,$}; 
\draw[->] (-10) to node  {} (00);
\draw[->] (00) to node   {} (0-1);
\node (20)  [node distance=1.2cm, below of=-10]       {$\Tor^{R}_{2}(M,k)\otimes_{}E_{0}$}; 
\node (01)  [node distance=4cm, right of=20]       {$\Tor^{R}_{0}(M,k)\otimes_{}E_{1}$}; 
\node (10)  [node distance=1cm, below of=01]       {$\Tor^{R}_{1}(M,k)\otimes_{}E_{0}$}; 
\node (10a)  [node distance=.5cm, below of=01]       {$\bigoplus$}; 
\node (10b)  [node distance=4cm, right of=10]       {$0\ ,$}; 
\draw[->] (20) to node [above=10pt, right=-8pt] {$t_{2}$} (01);
\draw[->] (10) to node   {} (10b);
\end{tikzpicture} 
}
\end{center}

\begin{center}
\scalebox{1}{
\begin{tikzpicture}
\node (21)   [node distance=4cm, below of=20]   {$ \Tor^{R}_{2}(M,k)\otimes_{} E_{1}$}; 
\node (02)  [node distance=4cm, right of=21]       {$\Tor^{R}_{0}(M,k)\otimes_{}E_{2}$}; 
\node (30)  [node distance=1cm, below of=21]       {$\Tor^{R}_{3}(M,k)\otimes_{}E_{0}$}; 
\node (11)  [node distance=4cm, right of=30]       {$\Tor^{R}_{1}(M,k)\otimes_{}E_{1}$}; 
\node (20)  [node distance=1cm, below of=11]       {$\Tor^{R}_{2}(M,k)\otimes_{}E_{0}$}; 
\node (01)  [node distance=4cm, right of=20]       {$\Tor^{R}_{0}(M,k)\otimes_{}E_{1}$\ .}; 

\node (O2)    [node distance=.5cm, below of=21]      {$\bigoplus$}; 
\node (O3)     [node distance=.5cm, below of=02]     {$\bigoplus$};  
\node (O4)     [node distance=.5cm, below of=11]     {$\bigoplus$};  

\draw[->] (21) to node [above=7pt, right=-8pt]  {$t_{2} $} (02);
\draw[->] (30) to node [below=7pt, right=-8pt]  {$t_{2} $} (11);
\draw[->] (30) to node [above=6pt, right=-8pt]  {$t_{3} $} (02);
\draw[->] (20) to node [above=7pt, right=-8pt]  {$t_{2} $} (01);
\end{tikzpicture} 
}
\end{center}

Under the regularity hypothesis of the Theorem the rows of the diagram ${\bf T}(M)$ are exact. In particular the map
$$
\tor_{2}^{R}(M,k)\otimes E_{0} \map{t_{2}} \tor_{0}^{R}(M,k)\otimes E_{1}
$$
in the sequence for $\tor_{2}^{S}(M,k)$ above is injective. Thus $\Ho_{0}({\bf T}(M))$ coincides
with $\tor^{S}(M,k)$ in degrees $\leq 2$. Since $\tor^{S}(M,k)$ is 1-regular by Theorem~\ref{twores}, this implies that 
$\Ho_{0}({\bf T}(M))$ coincides
with $\tor^{S}(M,k)$ in all degrees.
Together with the exactness of the two strands, this proves the Theorem.
\mqed

\begin{cor}\label{first syzygy} 
With hypotheses and notation as in Theorem~\ref{twores}, let
$M_{1}$ be the first syzygy of $M$ over $R$. The degree 0 strand of the minimal resolution of $\tor^{S}(M_{1},k)$ is  equal to the degree $1$ strand of the minimal resolution of
$\tor^{S}(M,k)$;
and the degree $1$ strand of the minimal  resolution of $\tor^{S}(M_{1},k)$ is equal to the degree $0$ strand of the minimal  resolution of
$\tor^{S}(M,k)$, truncated at homological degree $1$.\mqed
\end{cor}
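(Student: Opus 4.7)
The plan is to apply Theorem~\ref{main7} directly to the module $M_1$ and then identify the pieces of the resulting resolution via the standard syzygy shift $\tor^R_i(M_1,k) \cong \tor^R_{i+1}(M,k)$. Before applying the theorem we need to know that $M_1$ satisfies its hypothesis, namely that $\ext_R^{\even}(M_1,k)$ has regularity $0$ and $\ext_R^{\odd}(M_1,k)$ has regularity $1$ as $\CR$-modules. Since $M$ is the module of a minimal higher matrix factorization, hence a sufficiently high $R$-syzygy, its first $R$-syzygy $M_1$ is also a sufficiently high $R$-syzygy; by the results of~\cite{EP} reviewed in Section~\ref{layered res}, $M_1$ is itself the module of a minimal higher matrix factorization with respect to the same regular sequence $\f$, and Corollary~\ref{extreg} applied to $M_1$ then gives precisely the required regularity bounds.

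With the hypothesis verified, Theorem~\ref{main7} produces the minimal $E$-free resolution ${\bf T}(M_1)$ of $\tor^S(M_1,k)$ as a mapping cone of two linear strands with differential $t_2$, the degree $0$ strand having $i$-th term $\tor^R_{2i}(M_1,k)\otimes E$ and the degree $1$ strand having $i$-th term $\tor^R_{2i+1}(M_1,k)\otimes E$. The key observation is that the minimal $R$-free resolution of $M_1$ is the truncation-shift of the minimal $R$-free resolution $\G$ of $M$: if $\G: \cdots\to G_2\to G_1\to G_0$ resolves $M$, then $\cdots\to G_3\to G_2\to G_1$ (with $G_1$ in homological position $0$) resolves $M_1$. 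Consequently $\tor^R_i(M_1,k)=\tor^R_{i+1}(M,k)$, and the CI-operators $t_2$ on the resolution of $M_1$ are restrictions of those on $\G$, since the construction of $t_2$ in Section~\ref{inverseShamash} is natural with respect to truncation-shifts of free resolutions.

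Substituting these identifications, the degree $0$ strand of ${\bf T}(M_1)$ becomes
$$
\cdots\to \tor^R_{2i+1}(M,k)\otimes E\to\cdots\to \tor^R_1(M,k)\otimes E,
$$
which is exactly the degree $1$ strand of ${\bf T}(M)$, and the degree $1$ strand of ${\bf T}(M_1)$ becomes
$$
\cdots\to \tor^R_{2i+2}(M,k)\otimes E\to\cdots\to \tor^R_2(M,k)\otimes E,
$$
which is the degree $0$ strand of ${\bf T}(M)$ starting from its position-$1$ term, i.e.\ truncated at homological degree $1$ as claimed.

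The only real obstacle is the first step, confirming that $M_1$ inherits from $M$ the structure (or at least the regularity consequences) needed to apply Theorem~\ref{main7}; once that is in place, the remainder is pure bookkeeping on the identifications of $\tor^R(M_1,k)$ with shifted $\tor^R(M,k)$ and of the corresponding $t_2$-differentials.
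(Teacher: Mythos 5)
Your argument is correct and is exactly the route the paper intends: the corollary is stated as an immediate consequence of Theorem~\ref{main7}, via the identifications $\tor^R_i(M_1,k)\cong\tor^R_{i+1}(M,k)$ and the compatibility of the $t_2$-operators with truncation of the minimal $R$-free resolution. The only wobble is your justification that $M_1$ meets the hypothesis of Theorem~\ref{main7}: ``minimal higher matrix factorization module, hence a sufficiently high syzygy'' reverses the implication recorded in Section~\ref{layered res}, so you should instead either cite the fact from \cite{EP} that the first $R$-syzygy of a minimal higher matrix factorization module is again one, or argue directly that $\ext^{\even}_R(M_1,k)$ and $\ext^{\odd}_R(M_1,k)$ are, up to the degree shift, $\ext^{\odd}_R(M,k)$ and the truncation $\ext^{\even}_R(M,k)_{\geq 1}$, whose regularities are controlled by Corollary~\ref{extreg}.
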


We now turn to the free resolution of $\ext_{R}(M,k)$. Since the 
generators of $\CR$ have degree 2, we have
$$
\ext_{R}(M,k) = \ext^{\even}_{R}(M,k)\oplus \ext^{\odd}_{R}(M,k)
$$
as $\CR$-modules. We treat only the even part in detail, as the odd part
is analogous.

\begin{thm}\l{res of ext}
With  the hypotheses of Theorem~\ref{twores}, let
$$
 \sigma'_{i}: \ E_{1}\otimes T'_{i-1}\to T'_{i}
$$
be the multiplication maps.
The $i$-th differential in the minimal $\CR$-free resolution of 
$\ext^{\rm even}_{R}(M,k)$ 
is the map
$$
 \tau'_{i}: \ T'_{i}\otimes \CR(-i) \to T'_{i-1}\otimes_{k}\CR(-i+1)
$$
whose linear part
$$
 T'_{i}\otimes_{k} \CR_{1} \to T'_{i-1}
$$
is the vector space dual of $\sigma'_{i}$.

The corresponding statement holds for $\ext^{\odd}_{R}(M,k)$ and $T''$ as well.
\end{thm}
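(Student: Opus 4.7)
The plan is to apply the BGG reciprocity (Theorem~\ref{reciprocity}) to the pair $(T',\ \ext_R^{\rm even}(M,k))$, using the resolution of $T'$ already identified in Theorem~\ref{main7}. The goal is to show that the linear $\CR$-complex $\mathbb L(T')$ arising from the $E$-module structure of $T'$ is the minimal $\CR$-free resolution of $\ext_R^{\rm even}(M,k)$; once this is done, the description of $\tau'_i$ in the statement is nothing but the definition of the differential of $\mathbb L(T')$, which is built directly from the multiplication maps $\sigma'_i\colon E_1\otimes T'_{i-1}\to T'_i$ via the adjunction used in BGG.

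First, Corollary~\ref{extreg} gives $\reg_\CR\ext_R^{\rm even}(M,k)=0$, so by Corollary~\ref{BGG-regularity} the complex $\mathbb R(\ext_R^{\rm even}(M,k))$ is acyclic away from $\Ho^0$. Its $0$-th cohomology is by construction a graded $E$-module whose Hilbert function agrees with that of $T'$ (using $\dim_k\tor_{2i}^R(M,k)=\dim_k\ext_R^{2i}(M,k)$ and Theorem~\ref{resTor}). Next, I identify the complex $\mathbb R(\ext_R^{\rm even}(M,k))$ with the top strand of the complex $\mathbf T(M)$ from Theorem~\ref{main7}. The identification rests on three points: (a) $E$ is a Frobenius algebra, so $\hom_k(E,V)\cong E\otimes_k V$ as $E$-modules up to a shift by $c$; (b) the vector-space duality $\tor_{2i}^R(M,k)\cong \ext_R^{2i}(M,k)^\vee$ intertwines the $E$-action on $\hom_k(E,\ext_R^{2i}(M,k))$ with the natural $E$-action on $\tor_{2i}^R(M,k)\otimes E$; and (c) the differential of $\mathbb R(\ext_R^{\rm even}(M,k))$, which is induced by the $\chi_j$-action on $\ext$, dualizes under (a) and (b) to the differential $t_2$ of the top strand of $\mathbf T(M)$, which is induced by the CI operators $t_{2,j}$ on $\tor$. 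Combining this identification with Theorem~\ref{main7}, which asserts that the top strand of $\mathbf T(M)$ is the minimal $E$-free resolution of $T'$, and with the fact that free $E$-modules are injective (as $E$ is Frobenius), I conclude that $\mathbb R(\ext_R^{\rm even}(M,k))$ is an injective resolution of $T'$.

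Applying Theorem~\ref{reciprocity} then gives that $\mathbb L(T')$ is a free $\CR$-resolution of $\ext_R^{\rm even}(M,k)$. Since $\mathbb L(T')$ is by construction a linear complex with $i$-th term $T'_i\otimes_k\CR(-i)$, whose differential in degree $i$ is the $\CR$-linear extension of the vector space dual of $\sigma'_i\colon E_1\otimes T'_{i-1}\to T'_i$, this is exactly the description of $\tau'_i$ in the statement. Minimality follows because the linear strands of the $E$-free resolution of $T'$ are minimal by Theorem~\ref{twores}(ii), and BGG preserves minimality of linear complexes. The case of $\ext_R^{\rm odd}(M,k)$ and $T''$ is parallel: since $\reg_\CR\ext_R^{\rm odd}(M,k)=1$ by Corollary~\ref{extreg}, one first shifts by $1$ to reduce to the regularity-$0$ case, and then runs the same argument using the bottom strand of $\mathbf T(M)$ in place of the top strand.

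The main obstacle is the second step: carefully matching $\mathbb R(\ext_R^{\rm even}(M,k))$ with the top strand of $\mathbf T(M)$. This requires keeping track of the Frobenius self-duality $E\cong E^\vee$ only up to a shift, the $k$-duality between $\tor^R$ and $\ext_R$, and---most delicately---the compatibility between the $\CR$-action on $\ext_R(M,k)$ by CI operators and the homotopy-based $E$-action on $\tor^S(M,k)$ at the level of the resolution in Theorem~\ref{main7}. Once these compatibilities are verified, reciprocity delivers the theorem essentially for free.
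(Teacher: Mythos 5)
Your overall architecture coincides with the paper's: identify the minimal $E$-free resolution of $T'$ with the top strand of ${\bf T}(M)$ via Theorem~\ref{main7}, use that free $E$-modules are injective, invoke BGG reciprocity (Theorem~\ref{reciprocity}), and read the differential off the multiplication maps. There is, however, a duality slip at the pivotal step. The complex ${\mathbb R}(\ext_R^{\even}(M,k))$ is a cochain complex whose differentials are induced by the $\chi_j$, and since $\chi_j$ is the $k$-dual of $t_{2,j}$, this complex is the $k$-\emph{dual} of the top strand of ${\bf T}(M)$, not the top strand itself: the arrows point the opposite way. Dualizing the minimal free resolution of $T'$ produces an injective resolution of $\hom_k(T',k)$, not of $T'$; reciprocity therefore yields that ${\mathbb L}(\hom_k(T',k))$ --- not ${\mathbb L}(T')$ --- is the free resolution of $\ext_R^{\even}(M,k)$. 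This is exactly how the paper argues, and it is consistent with the statement being proved (the terms are free modules of rank $\dim_k T'_i$ and the linear part of the differential is the \emph{dual} of $\sigma'_i$, which is the differential of ${\mathbb L}(\hom_k(T',k))$, not of ${\mathbb L}(T')$). But your intermediate assertion that ${\mathbb R}(\ext_R^{\even}(M,k))$ coresolves $T'$ is incorrect as written, and the appeal to equality of Hilbert functions cannot by itself identify which $E$-module is being coresolved. Replacing $T'$ by $\hom_k(T',k)$ throughout the reciprocity step closes the argument. Two smaller points: the opening appeal to Corollary~\ref{BGG-regularity} becomes redundant once ${\mathbb R}(\ext_R^{\even}(M,k))$ is identified with the dual of an acyclic complex; and minimality needs no transfer statement, since any linear free complex over $\CR$ has differentials with entries of degree $1$ and is therefore automatically minimal.
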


\begin{proof} 

 By Theorem~\ref{main7}, the minimal $E$-free resolutions of $T'$ is given by the $\CR$-module structure of the even  part of $\tor^{R}(M,k)$.
Since $\omega_{E}:=\hom(E,k) \cong E(c)$ is an injective $E$-module,
the vector space dual of this resolution is the injective resolution of the $E$-modules $\hom(T',k)$. Furthermore, the differentials in this injective resolution come, via the BGG correspondence, from the module structure of the even part of the graded vector space dual of $\tor^{R}_{\even}(M,k)$, which is the $\CR$-module $\ext_{R}^{even}(M,k)$. 

By Theorem~\ref{reciprocity}, the  resolution of 
$\ext_{R}^{\even}(M,k)$ is the BGG dual
of $\hom(T',k)$. If the module structure of $T'$
is given by maps $\mu_i: E_{1}\otimes T'_{i}\to T'_{i+1}$, then the module
structure of $\hom(T',k)$ is given by maps
$$\mu'_i:\  E_{1}\otimes \hom(T'_{i+1},k)\to \hom(T'_{i},k)\,$$
and the BGG dual complex
$$
\cdots \to E\otimes \hom(T'_{i+1},k)\to \cdots \to E\otimes \hom(T'_{1},k) \to
E\otimes \hom(T'_{0},k)
$$
 is induced by the maps
$\mu''_i:  \hom(T'_{i+1},k)\to \hom(E_{1}, k) \otimes \hom(T'_{i},k)$.
Identifying  $\hom(E_{1}, k) \otimes \hom(T'_{i},k)$ with 
$\hom(E_1\otimes T_i',k)$, we see that $\mu''_i$ is, up to change of basis, the same as
$\hom(\mu_i,k)$, proving the theorem.
\end{proof}

Since $T'_1 =  \oplus_{p} E_1(p-1)\otimes B_0(p)$, the minimal $\CR$-free presentation of $\ext_R^{\even}(M,k)$, with the hypotheses in Theorem~\ref{res of ext}, can be written
as 
\balign
\CR(-1)\otimes \left(\sum_{p=1}^c \hbox{span}_k \langle e_1,\dots, e_{p-1}\rangle \otimes B_0(p)\right)& \to \CR\otimes \left(\sum_{p=1}^c B_0(p)\right)\\
&\to \ext_R^{\even}(M,k) \to 0\,,
\end{align*}
where the map is induced by the appropriate components of the homotopies.

There is an even more direct way of getting a free presentation for the even part of Ext:

\begin{cor}\label{nonminimal presentation} With the hypothesis of Theorem~\ref{twores}, the module $\ext^{\even}_{R}(M,k)$ has an
$\CR$-free presentation as the
cokernel of the map
$$
\tau: \ext^{1}_{S}(M,k)\otimes_{k}\CR(-1) \to \hom(M,k)\otimes_{k}\CR
$$
whose linear part
$$
\mu^{\vee}: \ext^{1}_{S}(M,k) \to \hom(M,k)\otimes_{k}\CR_1
$$
 is the vector space dual of the multiplication map
$$
 \mu: E_{1}\otimes_k \tor_{0}^{S}(M,k)  \to \tor_{1}^{S}(M,k)
$$
\end{cor}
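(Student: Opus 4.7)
The plan is to deduce Corollary~\ref{nonminimal presentation} directly from Theorem~\ref{res of ext} by enlarging the minimal free presentation so that the source term $(T'_1)^\vee \otimes \CR(-1)$ is replaced by the a priori larger $\ext^1_S(M,k) \otimes \CR(-1)$. The $i=1$ case of Theorem~\ref{res of ext} gives an exact sequence
$$
(T'_1)^\vee \otimes_k \CR(-1) \xrightarrow{\tau'_1} (T'_0)^\vee \otimes_k \CR \to \ext^{\even}_R(M,k) \to 0,
$$
in which the linear part of $\tau'_1$ is the dual of $\sigma'_1: E_1 \otimes T'_0 \to T'_1$. Since $T'_0 = \tor^S_0(M,k)$ we have $(T'_0)^\vee = \hom(M,k)$, so only the source needs to be enlarged.

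The key observation is that by the definition $T' = E \cdot \tor^S_0(M,k)$, the multiplication map $\mu: E_1 \otimes \tor^S_0(M,k) \to \tor^S_1(M,k)$ has image contained in $T'_1$. Writing $\iota: T'_1 \hookrightarrow \tor^S_1(M,k)$ for the inclusion arising from the vector-space splitting $\tor^S_1(M,k) = T'_1 \oplus T''_1$ of Theorem~\ref{twores}(i), we have $\mu = \iota \circ \sigma'_1$, and hence $\mu^\vee = (\sigma'_1)^\vee \circ \iota^\vee$, where $\iota^\vee: \ext^1_S(M,k) \to (T'_1)^\vee$ is the projection onto the first summand of $\ext^1_S(M,k) = (T'_1)^\vee \oplus (T''_1)^\vee$. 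Consequently $\mu^\vee$ vanishes on $(T''_1)^\vee$ and agrees with $(\sigma'_1)^\vee$ on $(T'_1)^\vee$.

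To finish, define $\tau$ to be $\tau'_1$ on the summand $(T'_1)^\vee \otimes \CR(-1)$ and the zero map on $(T''_1)^\vee \otimes \CR(-1)$; by the previous paragraph this is a map whose linear part is exactly $\mu^\vee$. Adjoining a summand that maps to zero does not change the cokernel, so $\coker \tau = \coker \tau'_1 = \ext^{\even}_R(M,k)$, as required. There is essentially no serious obstacle beyond keeping track of the identifications; the entire content of the corollary is the factorization $\mu = \iota \circ \sigma'_1$ combined with Theorem~\ref{res of ext}.
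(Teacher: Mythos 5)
Your argument is correct and follows essentially the same route as the paper's proof: both deduce the corollary from Theorem~\ref{res of ext} by observing that $\mu$ factors through $T'_1$, so that $\mu^{\vee}$ vanishes on the summand $(T''_1)^{\vee}$ and the enlarged presentation has the same cokernel as the minimal one. The only (harmless) difference is that you justify $\image(\mu)\subseteq T'_1$ directly from the definition $T'=E\cdot\tor_0^S(M,k)$, whereas the paper cites Proposition~\ref{revan}, which is what underlies the identification of $T'_1$ with the explicit complement of $\overline{A_1}$ in Theorem~\ref{twores}(i).
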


\begin{proof}
With notation as above, $\tor_{0}^{S}(M,k) = T'_{0}$, and by Theorem~\ref{res of ext} the even Ext module has minimal $\CR$-free presentation as the cokernel of the map
$$
(T'_{1})^{\vee}\otimes_{k}\CR(-1) \to (\tor_{0}^{S}(M,k))^{\vee}\otimes_{k}\CR.
$$
We have 
$$
\tor_{1}^{S}(M,k) = T'_{1}\oplus T''_{1},
$$ 
so it suffices to show that the  image of
$\mu$ is contained in $T'_{1}$. This follows from Proposition~\ref{revan}.
\end{proof}

\makeatletter
\font \sectionfont = cmbx12 
\renewcommand\section{\@startsection{section}%
{1}{0pt}{-1.8\baselineskip}{.8\baselineskip}%
{\sectionfont \center}}
\makeatother

\bibliographystyle{alpha}

\begin{thebibliography}{XYZZZ}  
\vglue .2cm


\bibitem[AB]{AB} L. L. ~Avramov and R.~Buchweitz: Homological Algebra Modulo a Regular Sequence with Special Attention to Codimension Two, {\sl J. Alg.} {\bf 230} (2000) 24--67.

\bibitem[AI]{AI} L. L.~Avramov and S.~Iyengar: Cohomology over complete intersections via exterior algebras, {Triangulated categories},
{\sl London Math. Soc. Lecture Note Ser.} {\bf 375}, Cambridge Univ. Press, Cambridge, 2010, 52--75.

\bibitem[AY]{AZ} L. L.~Avramov and Z. Yang: Betti sequences over standard graded commutative algebras with two relations. Preprint.

\bibitem[Bu]{Burke} J. Burke: Higher CI-Operators, in preparation.

\bibitem[CE]{CE} H.~Cartan and S.~Eilenberg: Homological Algebra, Reprint of the 1956 original, Princeton Landmarks in Mathematics, Princeton University Press, Princeton, NJ,  1999.


 \bibitem[Da]{Da}
H. Dao: Decent intersection and Tor-rigidity for modules over local hypersurfaces, {\sl Trans. Amer. Math. Soc.} {\bf 365} (2013), 2803--2821. 

 \bibitem[Ei1]{Ei}  D.~Eisenbud: Homological algebra on a complete
intersection, with an application to group representations,
{\sl Trans. Amer. Math. Soc.}
{\bf	260 }
(1980),
35--64. 

 
 \bibitem[Ei2]{Ei2}  D.~Eisenbud:  Periodic resolutions over exterior algebras, Special issue in celebration of Claudio Procesi's 60th birthday, {\sl J. Algebra} {\bf 258} (2002), 348--361. 


 \bibitem[Ei3]{EiBook}  D.~Eisenbud:  Commutative Algebra with a View Toward Algebraic Geometry, {\sl Graduate Texts in Math.} {\bf 150}, Springer-Verlag 1995.
 
 \bibitem[EFS]{EFS} D.~Eisenbud, G.~Fl\o ystad and F.-O.~Schreyer: 
  Sheaf cohomology and free resolutions over exterior algebras, {\sl Trans. Amer. Math. Soc.} {\bf 355} (2003),  4397--4426.

\bibitem[EP1]{EP} 
 D. Eisenbud, I. Peeva:
 Minimal Free Resolutions over Complete Intersections, {\sl Springer Lecture Notes in Math. {\bf 2152}}, 2016.
 
\bibitem[EP2]{EP1}			
 D. Eisenbud, I. Peeva:
 Layered Resolutions of Cohen-Macaulay Modules, preprint.

 
 \bibitem[Fl]{Fl} 
G.  Fl{\o}ystad: Exterior algebra resolutions arising from homogeneous bundles, {\sl Math. Scand. } {\bf 94} (2004), 191--201. 

\bibitem[Gu]{G}  T. Gulliksen: A change of ring theorem with applications to Poincar\'e series and intersection multiplicity, {\sl  Math. Scand.} {\bf 34} (1974), 167--183.
 
  \bibitem[HW]{HW} 
 C. Huneke, R. Wiegand: Tensor products of modules and the rigidity of Tor, {\sl Math. Ann.} {\bf 299} (1994), 449--476. 
 
\bibitem[M2]{M2}  
Macaulay2{\thinspace}--{\thinspace}a system for computation in
  algebraic geometry and commutative algebra programmed by D.~Grayson and M.~Stillman,
\href{http://www.math.uiuc.edu/Macaulay2/}{\tt  http://www.math.uiuc.edu/Macaulay2/}

\bibitem[Sh]{Shamash} J.~Shamash:
The Poincar\'e series of a local ring, {\sl J. Alg.}(1969), 453--470. 

\end{thebibliography}

\end{document}